\documentclass[11pt]{amsart}
\usepackage[dvipsnames]{xcolor}
\usepackage[pagebackref,backref=true,colorlinks=true, linkcolor=Sepia, citecolor=Sepia]{hyperref}
\usepackage{amsmath,amsthm,amssymb,fancyhdr,graphicx,bbm,cancel,mathrsfs,todonotes,xypic,pinlabel}
\usepackage{cleveref}
\usepackage{tikz}
\usetikzlibrary{external}
\usepackage{extarrows,tipa}
\usetikzlibrary{matrix}
\usepackage[all]{xy}
\usepackage{mathtools}
\usepackage{bm}
\usepackage{verbatim}
\usepackage{microtype}
\usepackage{subcaption}
\usepackage{tikz}
\usepackage{tikz-cd}
\theoremstyle{theorem}
\newtheorem{theorem}{Theorem}[section]
\newtheorem{theoremalpha}{Theorem}

\newtheorem{lemma}[theorem]{Lemma}

\newtheorem{corollary}[theorem]{Corollary}
\theoremstyle{definition} 

\newtheorem{remark}[theorem]{Remark}
\theoremstyle{remark} 

\usepackage{enumerate,pdfpages,stmaryrd}
\usepackage[margin=1in, marginparwidth=1in]{geometry}
\usepackage{tikz}
\usepackage{dsfont}
\usetikzlibrary{matrix}

\renewcommand{\xto}{\xrightarrow}

\newcommand{\C}{\mathbb{C}}

\newcommand{\Z}{\mathbb{Z}}

\newcommand{\R}{\mathbb{R}}

\newcommand{\F}{\mathbb{F}}

\newcommand{\beq}{\begin{equation*}}
\newcommand{\eeq}{\end{equation*}}

\newcommand{\On}{\mathrm{O}}

\newcommand{\GL}{\mathrm{GL}}

\DeclareMathOperator{\Diff}{Diff}

\DeclareMathOperator{\Aut}{Aut}

\DeclareMathOperator{\id}{id}

\DeclareMathOperator{\Top}{Top}

\newcommand{\dmo}{\DeclareMathOperator}

\newcommand{\N}{\mathbb{N}}

\newcommand{\Om}{\Omega}\newcommand{\ka}{\kappa}

\newcommand{\wtil}{\widetilde}

\newcommand{\bb}[1]{\mathbb{#1}}

\dmo{\sgn}{sign}\dmo{\Span}{span}
\dmo{\we}{\wedge}
\dmo{\ind}{ind}\dmo{\Ind}{Ind}
\dmo{\bop}{\bigoplus}\dmo{\pic}{Pic}
\dmo{\vol}{Vol}\dmo{\gal}{Gal}\dmo{\perm}{Perm}
\dmo{\tor}{Tor}\dmo{\ext}{Ext}\dmo{\Ext}{Ext}
\dmo{\aut}{aut}

\dmo{\inn}{Inn}\dmo{\var}{Var}
\dmo{\ad}{ad}\dmo{\curl}{curl}
\dmo{\hy}{\bb H}\dmo{\Sl}{SL}
\dmo{\psl}{PSL}
\dmo{\iso}{iso}
\dmo{\conf}{Conf}
\dmo{\stab}{Stab}\dmo{\Jac}{Jac }
\dmo{\diam}{diam}\dmo{\fix}{Fixed}\dmo{\Fix}{Fix}
\dmo{\injR}{injRad}\dmo{\Ad}{Ad}
\dmo{\esv}{ess-vol}
\dmo{\nil}{Nil}\dmo{\sol}{Sol}
\dmo{\Div}{div}
\dmo{\SU}{SU}
\dmo{\rk}{rk}
\dmo{\rank}{rank}
\dmo{\psp}{PSp}\dmo{\psu}{PSU}
\dmo{\PU}{PU}\dmo{\pgl}{PGL}
\dmo{\Mod}{Mod}\dmo{\range}{Range}
\dmo{\eu}{eu}\dmo{\mi}{mi}
\dmo{\Log}{Log}\dmo{\supp}{supp}
\dmo{\maps}{Maps}\dmo{\Gr}{Gr}
\dmo{\Pin}{Pin}
\dmo{\Spin}{Spin}\dmo{\Str}{Str}
\dmo{\Sq}{Sq}\dmo{\Symp}{Symp}
\dmo{\pd}{PD}\dmo{\PD}{PD}\dmo{\sig}{Sig}
\dmo{\ev}{ev}\dmo{\St}{St}
\dmo{\Pt}{Pt}\dmo{\pt}{pt}
\newcommand{\msf}{\mathsf}

\dmo{\Pl}{PL}
\dmo{\String}{String}\dmo{\smear}{smear}

\dmo{\dev}{dev}
\dmo{\met}{Met}\dmo{\contact}{Contact}
\dmo{\teich}{Teich}\dmo{\Teich}{Teich}\dmo{\qi}{QI}
\dmo{\der}{Der}
\dmo{\cl}{Cliff}\dmo{\Cl}{Cl}
\dmo{\Pf}{Pf}
\dmo{\ch}{ch}\dmo{\diag}{diag}
\dmo{\grad}{grad}\dmo{\Char}{char}
\dmo{\spec}{Spec}\dmo{\Arg}{Arg}
\dmo{\gl}{GL}
\dmo{\sym}{Sym}\dmo{\Sym}{Sym}
\dmo{\com}{Comm}
\dmo{\Lk}{Lk}
\dmo{\CAT}{CAT}
\dmo{\Rep}{Rep}
\dmo{\Res}{Res}
\dmo{\Conf}{Conf}
\dmo{\PConf}{PConf}
\dmo{\Push}{Push}
\dmo{\Cont}{Cont}
\dmo{\sm}{\setminus}
\dmo{\vn}{\varnothing}
\dmo{\disk}{\mathbb D}
\dmo{\Trd}{Trd}\dmo{\Mat}{Mat}
\dmo{\Riem}{Riem}
\dmo{\Diffn}{\Diff_0}\dmo{\diff}{diff}
\dmo{\homeo}{Homeo}

\dmo{\Ham}{Ham}\dmo{\Met}{Met}
\dmo{\Ein}{Ein}\dmo{\CP}{\co P}
\dmo{\Per}{Per}\dmo{\Ric}{Ric}
\dmo{\Nrd}{Nrd}
\dmo{\Comp}{Comp}\dmo{\PSC}{PSC}
\dmo{\Cent}{Cent}\dmo{\Orb}{Orb}
\dmo{\aind}{a-ind}\dmo{\tind}{t-ind}
\dmo{\constant}{constant}
\dmo{\Td}{Td}
\dmo{\LMod}{LMod}
\dmo{\SMod}{SMod}
\dmo{\SDiff}{SDiff}
\dmo{\Br}{Br}
\dmo{\csch}{csch}
\dmo{\triv}{triv}
\dmo{\genus}{genus}
\dmo{\Homeq}{HomEq}
\dmo{\PP}{\mathbb{P}}
\dmo{\U}{U}
\dmo{\Gal}{Gal}
\dmo{\BDiff}{\wtil{\Diff}}
\dmo{\BAut}{\wtil{\Aut}}
\dmo{\Iso}{Iso}
\dmo{\Cone}{Cone}
\dmo{\codim}{codim}
\dmo{\II}{II}
\dmo{\I}{I}
\dmo{\InjRad}{InjRad}
\dmo{\Inn}{Inn}
\dmo{\sys}{sys}
\dmo{\Comm}{Comm}
\dmo{\PO}{PO}
\dmo{\vertex}{Vert}
\dmo{\POm}{P\Om}
\dmo{\ab}{ab}
\dmo{\PSO}{PSO}
\dmo{\CRS}{CRS}
\dmo{\Diffext}{Diffext}
\dmo{\Diffextad}{Diffextad}
\dmo{\Diffstand}{Diffstand}

\begin{document}
\title[]{Aspherical manifolds with nonvanishing\\ tautological classes}
\author{Mauricio Bustamante}
\maketitle
\begin{abstract}
For every even integer $m\geq 2$, we construct a closed, orientable, smooth, aspherical $(2m+1)$-manifold whose fundamental group has nontrivial center and for which infinitely many tautological classes in the $\F_2$-cohomology of the classifying space of its group of homotopically trivial diffeomorphisms are nonzero and not nilpotent. Specializing the calculation to cohomological degree $0$ gives examples of such manifolds that do not bound compact smooth manifolds. These provide counterexamples to a conjecture of Hebestreit--Land--L\"uck--Randal-Williams.
\end{abstract}
\section{Introduction}
A smooth manifold $M$ is called \textit{aspherical} if its universal cover is contractible. Hence these manifolds are Eilenberg--Maclane spaces and so much of their topology is encoded in their fundamental groups. In this regard, there seems to be some connection between the role played by the \textit{center} of the fundamental group $\pi_1(M)$ of $M$ and the vanishing of its topological invariants. Gottlieb \cite[Corollary IV.3]{gottlieb} showed that the Euler characteristic of a closed aspherical manifold with nontrivial center vanishes. In the same vein, Farrell \cite[Theorem 1.2]{farrell} showed that the signature of a closed orientable aspherical manifold of dimension $4k$ vanishes if, in addition to having nontrivial center, the fundamental group is residually finite. Later, Byun \cite[Theorem 1]{byun} proved that for this type of manifold, the Stiefel--Whitney numbers vanish under the additional assumption that the restriction of the mod $2$ Hurewicz homomorphism to the center of $\pi_1(M)$ is nonzero. More recently, Hebestreit, Land, L\"uck, and Randal-Williams \cite[Corollary]{HLLRW} showed, under extra hypotheses involving the block Borel and Burghelea conjectures, that all Pontryagin numbers of such manifolds vanish, and asked whether they must be (unoriented) nullbordant \cite[Question 6]{HLLRW}.

In this note we show the existence of closed oriented smooth aspherical manifolds whose fundamental groups have nontrivial center and for which some Stiefel--Whitney number is nonzero. Hence the answer to the question of Hebestreit, Land, L\"uck, and Randal-Williams is no. In turn, their general conjecture, formulated in \cite{HLLRW}, about the vanishing of the tautological classes in $H^*(B\widetilde{\Top}_h(M);R)$ for all commutative rings $R$, is false. Furthermore, we show that the cohomology of $B\Diff_h(M)$ and hence that of $B\widetilde{\Top}_h(M)$ is nontrivial in arbitrarily high degrees.

We now state our results precisely. For a closed oriented smooth $n$-manifold $N$, let $\Diff_h(N)$ denote the topological group of diffeomorphisms of $N$ which are homotopic to the identity. Let
\beq
\pi:E\Diff_h(N)\times_{\Diff_h(N)}N\to B\Diff_h(N)
\eeq
be the universal smooth $N$-bundle over the classifying space $B\Diff_h(N)$. We denote its vertical tangent bundle by $\msf{T}_v(\pi)$. If $\mathfrak{c}$ is a polynomial in the Stiefel--Whitney classes, its corresponding tautological class is defined by:
\beq
\kappa_\mathfrak{c}:=\pi_!\bigl(\mathfrak{c}(\msf{T}_v(\pi))\bigr)
\in H^*(B\Diff_h(N);\F_2),
\eeq
where $\pi_!$ is integration along the fibers, or the Gysin homomorphism.

We denote by $v=1+v_1+v_2+\cdots\in H^*(B\On;\F_2)$ the universal total Wu class. It is characterized by the identity $\Sq(v)=w$, where $w=1+w_1+w_2+\cdots\in H^*(B\On;\F_2)$ is the total Stiefel--Whitney class and $\Sq$ is the total Steenrod square.
\begin{theoremalpha}\label{thm:tautological}
For every even integer $m\geq 2$, there exists a $(2m+1)$-dimensional closed, oriented, smooth aspherical manifold $M$ whose fundamental group has nontrivial center and for which
\beq
\kappa_{v_m^{2r+1}\Sq^1v_m}\neq 0\in H^{2mr}(B\Diff_h(M);\F_2)
\eeq
for infinitely many $r\geq 0$. Moreover, for each such $r$, the class $\kappa_{v_m^{2r+1}\Sq^1v_m}$ is not nilpotent.
\end{theoremalpha}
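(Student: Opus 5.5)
The plan is to realize $M$ as an aspherical manifold with an \emph{effective circle action whose subgroup $\Z/2\subset S^1$ has nonempty fixed set}. Then I would detect the tautological classes by pulling back along $B\Z/2\to BS^1\to B\Diff_h(M)$ and applying the localization theorem.

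\textbf{Why a circle action.} Any smooth $S^1$-action gives a homomorphism $S^1\to\Diff_h(M)$, since every element is isotopic to the identity. Hence it gives a map $BS^1\to B\Diff_h(M)$, and the pullback of the universal bundle is the Borel construction $ES^1\times_{S^1}M$. By Conner--Raymond, on an aspherical $M$ the orbit map induces an injection $\pi_1(S^1)\to Z(\pi_1M)$, which gives the nontrivial center for free.

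\textbf{Why the action must have fixed points for $\Z/2$.} A free action would give a finite-dimensional homotopy quotient, and all sufficiently high tautological classes would vanish. Restricting to $\Z/2$ with fixed set $F=M^{\Z/2}\neq\varnothing$, we instead get a class
\beq
\kappa_\mathfrak{c}\in H^*(B\Z/2;\F_2)=\F_2[t].
\eeq
Such a class is non-nilpotent as soon as it is nonzero, which yields the last assertion of Theorem~\ref{thm:tautological}.

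\textbf{Construction.} I would take a closed aspherical $2m$-manifold $N$ with a free orientation-compatible involution. Alternatively, I would take a Seifert-type construction: start from an aspherical $2m$-orbifold base with reflector/cone loci of isotropy $\Z/2$, and form the total space of an orbifold circle bundle over it. The total space is aspherical because the base orbifold is aspherical (e.g.\ obtained via Davis or Charney--Davis strict hyperbolization applied to a suitable $\Z/2$-manifold), and the circle fibers are $\pi_1$-injective. The point is to arrange the exceptional set $F$, which has codimension $2$ in $M$ and dimension $2m-1$, together with its normal bundle data, so that the localized computation below is nonzero. The input I would feed into the hyperbolization is a manifold with nonzero de Rham-type number $v_m\Sq^1v_m$, for instance built from products and covers of the Wu manifold $SU(3)/SO(3)$ when $m=2$. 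Hyperbolization preserves Stiefel--Whitney numbers, so for $r=0$ this also gives the degree-$0$ statement.

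\textbf{Localization computation.} For the $\Z/2$-equivariant bundle, the Atiyah--Bott/Quillen localization formula reads
\beq
\kappa_\mathfrak{c}=\sum_{F_i}(\pi_{F_i})_!\Bigl(\frac{\mathfrak{c}(TM|_{F_i})}{e_{\Z/2}(\nu_{F_i})}\Bigr)\in\F_2[t,t^{-1}],
\eeq
which holds in $\F_2[t]$ after localization. Here $\nu_{F_i}$ is the normal bundle of $F_i$, a $2$-plane bundle on which $\Z/2$ acts by $-1$. So
\beq
e_{\Z/2}(\nu)=t^2+w_1(\nu)t+w_2(\nu),
\eeq
and $TM|_F\cong TF\oplus\nu$ with equivariant Stiefel--Whitney classes obtained by the splitting principle, substituting $x\mapsto x+t$ on the normal roots. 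Then $\kappa_{v_m^{2r+1}\Sq^1v_m}$ becomes a Laurent polynomial in $t$. Its $t^{mr}$-coefficient, recall $|t|=2$ and the degree is $2mr$, is a characteristic number of $F$ twisted by $\nu$, depending on $r$ through binomial coefficients mod $2$. I would show, by a Lucas-type argument, that this coefficient equals a fixed nonzero number of $F$ (essentially $v_{m-1}\Sq^1 v_{m-1}$-type data or $w_1(\nu)$ times a top class on $F$) whenever $r$ lies in an infinite family such as $r=2^k-1$.

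\textbf{Main obstacle.} I expect the hardest step to be the simultaneous realization: producing an aspherical $M$ with such an $S^1$-action \emph{and} controlling the fixed data $(F,\nu)$ so that the resulting number of $F$ is nonzero. I would first try to make $F$ itself a hyperbolized lower-dimensional manifold with known nonzero Stiefel--Whitney number, and $\nu$ a line bundle sum $L\oplus\R$ or $L\oplus L$. Hyperbolization is functorial for such equivariant data, so I would build the $\Z/2$-manifold before hyperbolizing. Then the mod $2$ binomial bookkeeping in the localization formula becomes explicit.
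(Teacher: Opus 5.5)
Your overall framework is the right one, and it matches what the paper does. You use a homotopically trivial $C_2$-action that is the order-two subgroup of a circle action with nonempty fixed set, detect it by pulling back to $BC_2$ and localizing, and get non-nilpotence automatically because $\F_2[t]$ is a domain. But there is a genuine gap: the proposal never produces $M$ or computes a single coefficient, and that is the whole content of the theorem. Your own ``Main obstacle'' paragraph concedes that realizing the action and controlling the fixed data are unresolved.

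The ingredients you do list also do not fit together:
\begin{itemize}
\item A free involution on $N$ produces, via the mapping torus or any Seifert construction over the manifold $N/\iota$, a free $C_2$-action. You yourself rule that out.
\item Hyperbolizing a closed $5$-manifold such as the Wu manifold keeps $w_2w_3\neq 0$ but supplies no circle action and no mechanism for a center. Hyperbolization is functorial only for simplicial maps, so it can carry finite symmetries but not a circle action.
\item Reflector loci in the base would make $M$ nonorientable.
\end{itemize}
The thing to hyperbolize is a $2m$-manifold with an orientation-preserving involution that has fixed points; the circle has to be added afterwards.

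The missing idea is as follows. Take complex conjugation $c$ on $\C P^m$, which is orientation preserving for $m$ even and has fixed set $\R P^m$. Hyperbolize equivariantly to an involution $\rho$ on an aspherical $W$ with a fixed point, and set $M=T(\rho)$, the mapping torus. This is the flat case of your Seifert picture: $T(\rho)=(W\times\R/2\Z)/C_2$, with $C_2$ acting by $\rho$ and a half-turn. The translation action of $\R/2\Z$ restricts on its order-two subgroup to $[x,t]\mapsto[\rho(x),t]$. Its fixed set is $\Fix(\rho)\times S^1$, of codimension $m$ rather than $2$.

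You also do not localize on $M$ directly, which would require the fixed data of the hyperbolized involution. Instead, use the map $[x,t]\mapsto[h_K(x),t]$ to the Dold manifold $P_m=T(c)$. It is $C_2$-equivariant, has degree one on fibers, and is equivariantly stably tangential (average a splitting of the transversality sequence defining $h(K)$). Hence the pullback of $\kappa_{\mathfrak c}$ to $BC_2$ equals the corresponding fiber integral for $EC_2\times_{C_2}P_m$.

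Localize there at the fixed set $\R P^m\times S^1$, whose equivariant normal bundle is $T\R P^m\otimes\mu\otimes\lambda$, with $\mu$ the M\"obius bundle on $S^1$ and $\lambda$ the sign line bundle on $BC_2$. Here $v_m$ restricts to the Euler class $e$ and $\Sq^1v_m$ to $ue$. This gives $\binom{(m+1)(2r+1)}{m-1}t^{2mr}$, which equals $t^{2mr}$ whenever $(m+1)(2r+1)=2^s-1$, by Lucas' theorem.

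Two smaller points. In $H^*(B\Z/2;\F_2)$ one has $|t|=1$, so the relevant coefficient is that of $t^{2mr}$, not $t^{mr}$; your own formula for $e_{\Z/2}(\nu)$ already uses $|t|=1$. Also, the admissible $r$ must depend on $m$. For example, for $m=6$ and $r=2^k-1$ with $k\geq 2$, the coefficient above is even.
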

For a connected space $X$, let $\mathrm Z(\pi_1(X))$ denote the center of its fundamental group. 
\begin{theoremalpha}\label{thm:bordism}
For every $a\geq 1$ and $b\geq 0$ there is a closed, oriented, smooth, aspherical manifold $M_{a,b}$ of dimension $5a+4b$ such that $\mathrm Z(\pi_1(M_{a,b}))\supset\Z^a$
and $M_{a,b}$ is not nullbordant.
\end{theoremalpha}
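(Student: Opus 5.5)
We will build $M_{a,b}$ as a product of simpler aspherical pieces. Stiefel--Whitney numbers are multiplicative: if $X$ and $Y$ are closed manifolds that are not unoriented nullbordant, then neither is $X\times Y$, since the unoriented bordism ring $\mathfrak N_*$ is a polynomial ring over $\F_2$ and hence has no zero divisors. Products of aspherical manifolds are aspherical, and centers multiply: $\mathrm Z(\pi_1(X\times Y))=\mathrm Z(\pi_1X)\times\mathrm Z(\pi_1Y)$. So it suffices to produce two building blocks. The first is a closed oriented aspherical $5$-manifold $P$ with $\mathrm Z(\pi_1P)\supseteq\Z$ that is not nullbordant. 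The second is a closed oriented aspherical $4$-manifold $Q$ that is not nullbordant; no condition on its center is needed. We then set $M_{a,b}=P^a\times Q^b$, which has dimension $5a+4b$ and center containing $\Z^a$.

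For $Q$ we can take any closed oriented aspherical $4$-manifold with odd Euler characteristic. Since $w_4[Q]=\chi(Q)\bmod 2$, such a $Q$ is not nullbordant. One option is a suitable compact complex-hyperbolic surface. Another is a real-hyperbolic $4$-manifold with odd Euler characteristic, which exists by Ratcliffe--Tschantz-type constructions. Alternatively we could take a product of two surfaces and correct the parity, but a direct hyperbolic example is simpler.

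For $P$ we specialize Theorem~\ref{thm:tautological} to $m=2$ and $r=0$. This gives a $5$-dimensional closed oriented aspherical $M$ with nontrivial center such that $\kappa_{v_2\Sq^1 v_2}\neq 0\in H^0(B\Diff_h(M);\F_2)$. In degree $0$ this tautological class is simply the characteristic number $\langle v_2\Sq^1v_2(TM),[M]\rangle$, because restricting $\pi_!$ to a point of $B\Diff_h(M)$ gives integration over a single fiber $M$. This is a Stiefel--Whitney number of $M$; indeed it equals the de Rham invariant $w_2w_3[M]$. Its nonvanishing shows $M$ is not unoriented nullbordant, so $P:=M$ works. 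We still need to arrange that the center contains a copy of $\Z$, not just that it is nontrivial. A nontrivial center of a torsion-free group (the fundamental group of a finite-dimensional aspherical manifold) contains an infinite cyclic subgroup, so this is automatic.

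The main obstacle is the input from Theorem~\ref{thm:tautological}, in particular making sure that $r=0$ is among the infinitely many admissible $r$. If the theorem's set of $r$ does not obviously contain $0$, we would instead rely on its construction directly. We expect that construction to be a circle bundle or mapping torus over an aspherical $4$-manifold, chosen so that the de Rham invariant is nonzero, and we would verify $w_2w_3\neq0$ by hand. A secondary point to check is the existence of the odd-Euler-characteristic aspherical $4$-manifold $Q$. If that is problematic for a given $b$, we could replace $Q^b$ by suitable products of $P$ with surfaces, adjusting dimensions accordingly.
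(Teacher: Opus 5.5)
Your construction is the paper's: $M_{a,b}=M^a\times F^b$. Here $M$ is the $5$-dimensional mapping torus of the hyperbolized complex conjugation of $\C P^2$, and $F$ is a complex-hyperbolic surface with odd Euler characteristic. The product is non-nullbordant because $\mathfrak{N}_*$ has no zero divisors, and the center contains $\Z^a$; your torsion-freeness argument is a fine substitute for the paper's explicit central elements $t^2$.

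The gap is in the only nontrivial input, $\langle w_2w_3(M),[M]\rangle\neq 0$. The statement of Theorem~\ref{thm:tautological} only guarantees nonvanishing for infinitely many unspecified $r$, so it does not justify taking $r=0$. Your fallback is a guess, not an argument. It also misdescribes the construction: $M$ is a bundle over $S^1$ whose fiber is the hyperbolized $\C P^2$, not a bundle over a $4$-manifold. The missing step lies one level below the theorem. Lemma~\ref{lem:Dold-tautological-class} gives $(p_D)_!(z_r)=\binom{(m+1)(2r+1)}{m-1}t^{2mr}$, which at $m=2$, $r=0$ equals $\binom{3}{1}=1$. Combined with Corollary~\ref{cor:beta} and the identities $v_2=w_2$ and $\Sq^1v_2=w_3$ for oriented manifolds, this is exactly $\langle w_2w_3(M),[M]\rangle=1$. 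Equivalently, $0\in\msf{Q}_2$ because $3=2^2-1$; this is the case $k=1$ in the proof of Lemma~\ref{lem:Lucas}. You can also avoid localization entirely. Restricting Lemma~\ref{lem:sw-Dold} to a single fiber shows that $\varphi: M\to P_2$ has degree one and pulls back Stiefel--Whitney classes. Hence $M$ has the same Stiefel--Whitney numbers as the Dold manifold $P(1,2)$, which is Dold's generator of $\mathfrak{N}_5$.

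Two of your alternatives for the $4$-dimensional factor do not work. For a closed oriented $4$-manifold the Wu formula gives $w_4=w_2^2$, and $\chi\equiv b_2\equiv\sigma \pmod 2$. So such a manifold is non-nullbordant if and only if its signature is odd. A closed oriented real-hyperbolic $4$-manifold has vanishing Pontryagin forms, hence $\sigma=0$ and even $\chi$; no construction can give a closed oriented one with odd Euler characteristic, so it bounds. Products of surfaces likewise have $\sigma=0$. Oriented surfaces bound, so replacing $Q$ by products of $P$ with surfaces gives nullbordant manifolds. Only your complex-hyperbolic option works, for example a fake projective plane with $\chi=3$ and $\sigma=1$, which is what the paper uses.
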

Note that in this way we obtain examples in dimensions $5,9,10,13,14,15$ and every dimension greater than or equal to $17$.

The idea that led to the proof of Theorem \ref{thm:bordism} and ultimately to that of Theorem \ref{thm:tautological} is the following (compare with the construction in \cite{cwy}): we apply Davis--Januszkiewicz hyperbolization \cite{dj} to complex conjugation on $\C P^2$. The resulting map is an involution with a fixed point on a certain closed aspherical manifold $V$. This fixed point and the finite order of the automorphism will guarantee a nontrivial center on the fundamental group of the mapping torus $M$ of this involution. Either by direct calculation, or by comparing $M$ with a Dold manifold, we can show that $M$ has a nonzero Stiefel--Whitney number, settling the result in dimension 5. Taking products of $M$ with itself and with fake projective planes (or any other closed orientable non-nullbordant aspherical $4$-manifold) yields the result in the other dimensions.
To obtain nontrivial tautological classes of higher degree, we exploit the fact that since the monodromy of the mapping torus $M$ has order $2$ then it comes equipped with an action of the cyclic group $C_2$ of order $2$ (in fact, of the circle $S^1$), so tautological classes can be detected in the cohomology of $\R P^{\infty}$.
\subsection*{Acknowledgments} 
I am grateful to Oscar Randal-Williams for his comments on a previous version of this paper, and especially for pointing me to \cite{lmp}, where the characteristic number $\left\langle v_m\Sq^1v_m(M),[M]\right\rangle$ that I ultimately use here is studied. He also suggested that I use the technique of equivariant localization to make the computations in Section \ref{sec:computations}. I thank Eduardo Reyes for useful comments and for helping me with the proof of Lemma \ref{lem:Lucas}. I also benefited from a conversation with Fabian Hebestreit. This work was supported by ANID Chile through Fondecyt Regular Grant 1250727.
\section{Functoriality of the hyperbolization procedure}\label{sec:hyperbolization}
We recall that the Davis--Januszkiewicz hyperbolization \cite{dj} (or Gromov's cylinder construction \cite[Section 3.4]{gromov}) assigns to a finite simplicial complex $K$ a compact nonpositively curved cubical complex $h(K)$ (which is therefore aspherical). This assignment is functorial for simplicial embeddings, and also produces a continuous map $h_K:h(K)\to K$ which is natural and surjective on homology with any coefficients. If $K$ is a smooth triangulation of a smooth manifold, then $h(K)$ is a smooth manifold and the map $h_K$ is stably tangential, namely
\beq
Th(K)\oplus\epsilon^r\cong h_K^*TK\oplus\epsilon^r,
\eeq
for some $r\geq 0$.  These properties are proved in \cite[Lemmas 1b.3, 1b.4 and 1d.1, Proposition 1f.5, Corollary 1f.6, Theorem 4a.3 and Section 4c]{dj}.

Fix a degree 1 hyperbolized $n$-simplex $(X^n,f)$ as constructed in \cite[Section 4c]{dj}. If $K$ is an $n$-dimensional simplicial complex and $K'$ is its barycentric subdivision, let $d_K:|K'|\to\sigma^n$ be the simplicial map which sends the barycenter of a simplex to the vertex indexed by its dimension. The hyperbolization of $K$ is the fiber product
\beq
h(K)=X^n\triangle K'=\{(x,y)\in X^n\times |K'|:f(x)=d_K(y)\},
\eeq
and $h_K$ is the restriction of the second projection. Also recall that this $n$-simplex $X^n$ has trivial tangent bundle, so we fix a trivialization $TX\cong\epsilon^n$.
\begin{lemma}\label{lem:natural-tangent}
If $K$ is a smooth triangulation of a closed smooth $n$-manifold, then the Davis--Januszkiewicz hyperbolization gives a short exact sequence
\beq
0\to Th(K)\to p_X^*TX^n\oplus h_K^*TK\xto{D\Phi}\epsilon^n\to 0,
\eeq
which is natural with respect to smooth simplicial isomorphisms. Moreover, if a finite group $G$ acts smoothly and simplicially on $K$, then the stable tangent bundle isomorphism
\beq
Th(K)\oplus\epsilon^n\cong h_K^*TK\oplus\epsilon^n
\eeq
may be chosen $G$-equivariantly.
\end{lemma}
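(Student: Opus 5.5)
The plan is to realize $h(K)$ as a transverse preimage. Consider the smooth map
\beq
\Phi: X^n\times |K'|\to \R^n,
\eeq
built so that $h(K)=\Phi^{-1}(0)$. Concretely, choose a smooth map $\rho:\sigma^n\to\R^n$ that is an embedding onto a neighbourhood of the image of the simplex, and set
\beq
\Phi(x,y)=\rho(f(x))-\rho(d_K(y)).
\eeq
One caveat: $f$ and $d_K$ are only piecewise smooth. So the actual construction will follow \cite[Section 4c]{dj}, where the fiber product is shown to be a smooth submanifold with its smooth structure coming from a transversality statement. I will take from there that $h(K)$ is cut out transversally, with $D\Phi$ surjective along $h(K)$. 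Granting this, the standard short exact sequence for a regular preimage of $0$ is
\beq
0\to T\Phi^{-1}(0)\to T(X^n\times|K'|)|_{\Phi^{-1}(0)}\xto{D\Phi}\epsilon^n\to0.
\eeq
Since $T(X\times K)=p_X^*TX\oplus p_K^*TK$, and $p_K$ restricted to $h(K)$ is $h_K$, this is exactly the displayed sequence.

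Naturality comes from the construction. Let $g:K\to K$ be a smooth simplicial isomorphism. It induces a simplicial isomorphism of $K'$ that preserves the dimension of the simplex whose barycenter a vertex is, so $d_K\circ g=d_K$. Then $\id_X\times g$ preserves $\Phi$, so it restricts to $h(g)$ on $h(K)$. Its derivative $\id\oplus Dg$ commutes with $D\Phi$, with the identity acting on $\epsilon^n$. So the whole sequence is equivariant.

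For the $G$-equivariant statement, let $G$ act on $p_X^*TX$ trivially (through $\id_X$), on $h_K^*TK$ through $Dg$, and on $\epsilon^n$ trivially. The sequence is then one of $G$-vector bundles over the compact $G$-space $h(K)$. I will split it $G$-equivariantly: choose a $G$-invariant metric by averaging any metric over the finite group $G$, and take the orthogonal complement of $Th(K)$. That complement is $G$-isomorphic to $\epsilon^n$ with trivial action, which gives
\beq
Th(K)\oplus\epsilon^n\cong p_X^*TX\oplus h_K^*TK.
\eeq
Finally, the fixed trivialization $TX\cong\epsilon^n$ pulls back to $p_X^*TX\cong\epsilon^n$. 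This is $G$-equivariant because $G$ acts trivially on the $X$-factor and on $\epsilon^n$. Combining the two gives the claimed isomorphism
\beq
Th(K)\oplus\epsilon^n\cong h_K^*TK\oplus\epsilon^n,
\eeq
$G$-equivariantly.

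The main obstacle is the first step: making the smooth structure on $h(K)$, and the transversality of $\Phi$, precise while keeping it natural. This needs the map $\Phi$, or whatever local smoothing device \cite{dj} uses near the codimension-$\geq1$ faces, to depend only on the $\sigma^n$-coordinate. Only then does it commute with simplicial automorphisms of $K$. My first attempt would be to check that the smoothing in \cite[Section 4c]{dj} is done on $X^n$ and on the model $\sigma^n$ alone. I would then use smoothness of the triangulation of $K$ to transport it to $K'$, where it commutes with every simplicial isomorphism because these preserve $d_K$.
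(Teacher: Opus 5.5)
Your proposal is correct and follows essentially the same route as the paper. Both realize $h(K)$ as the transverse preimage $\Phi^{-1}(0)$ of $\Phi(x,y)=f(x)-d_K(y)$, taking smoothness and transversality from Davis--Januszkiewicz (the paper cites the proof of \cite[Proposition 1f.5]{dj}), derive naturality from the fact that simplicial isomorphisms preserve $d_K$, and get equivariance by averaging over $G$; the paper averages a splitting of $D\Phi$ where you average a metric and take the orthogonal complement, which amounts to the same thing.
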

\begin{proof}
Identify the affine span of $\sigma^n$ with $\R^n$ and consider the map $\Phi_K:X^n\times|K'|\to\R^n$ given by $\Phi_K(x,y)=f(x)-d_K(y)$. By the proof of \cite[Proposition 1f.5]{dj}, this map can be regarded as smooth and transverse to $0$, and $h(K)=\Phi_K^{-1}(0)$. Its differential therefore gives the stated short exact sequence.

If $g:K\to L$ is a smooth simplicial isomorphism, then the induced map $g':K'\to L'$ preserves the dimension of each simplex, and hence $d_L\circ g'=d_K$. Thus $\Phi_L\circ(\id_{X^n}\times g')=\Phi_K$, so the exact sequences are natural.

Suppose now that a finite group $G$ acts smoothly and simplicially on $K$. By naturality, the exact sequence is $G$-equivariant. Choose a splitting $\sigma_K:\epsilon^n\to p_X^*TX^n\oplus h_K^*TK$ of $D\Phi_K$ and average it over $G$. The resulting splitting is $G$-equivariant and still satisfies $D\Phi_K\circ\sigma_K=\id$. It therefore gives a $G$-equivariant isomorphism
\beq
Th(K)\oplus\epsilon^n\cong p_X^*TX^n\oplus h_K^*TK.
\eeq
Finally, $G$ acts trivially on the $X^n$-coordinate, so the fixed trivialization $p_X^*TX^n\cong\epsilon^n$ is $G$-equivariant. The result follows.
\end{proof}
\section{Some aspherical manifolds with non-trivial center}\label{sec:construction}
Let $N$ be a closed, oriented, connected, smooth $n$-manifold and $\phi:N\to N$ an orientation preserving diffeomorphism of order $k$ with a fixed point. By Illman's equivariant triangulation theorem \cite[Theorem]{illman}, there is a finite smooth triangulation $K$ of $N$ on which $\phi$ acts simplicially. After barycentric subdivision we may suppose that $\phi$ fixes a vertex $v\in K$.

Put $W:=h(K)$, the hyperbolization of $K$, and $\rho:=h(\phi):W\to W$. By functoriality and naturality (see Section \ref{sec:hyperbolization}), we have that $\rho^k=\id$, $h_K\circ\rho=\phi\circ h_K$, and $h(v)$ is a fixed point of $W$.
Furthermore the stable tangent bundle formula gives
\beq
w_1(W)=h_K^*w_1(N)=0,
\eeq
so $W$ is orientable. Since $h_K$ has degree $1$, we may orient $W$ so that $\deg(h_K)=1$. Taking degrees in the identity $h_K\circ\rho=\phi\circ h_K$ gives $\deg(\rho)=\deg(\phi)=1$. Therefore $\rho$ preserves orientation. It is also clear that $\rho$ has order $k$. In fact, functoriality implies that $\rho^k=\id$ and, combined with surjectivity of $h_K$, gives that a smaller power of $\rho$ cannot be the identity, because otherwise $\phi$ would have order less than $k$.

Denote by $T(\rho)$ the mapping torus of $\rho:W\to W$. This is the space $W\times\R/\left((x,t)\sim(\rho(x),t+1)\right)$. It is a closed, connected, oriented, smooth $(n+1)$-manifold.
\begin{lemma}\label{lem:aspherical-center}
The manifold $T(\rho)$ is aspherical and $\mathrm Z(\pi_1(T(\rho)))$ is nontrivial.
\end{lemma}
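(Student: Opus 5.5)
Asphericity is immediate from the fibration $W\to T(\rho)\to S^1$. Its universal cover is $\widetilde W\times\R$, and $\widetilde W$ is contractible because $W=h(K)$ is a compact nonpositively curved cubical complex; more precisely, $W$ is a closed manifold that is locally CAT(0), so Cartan--Hadamard applies. Hence $T(\rho)$ is aspherical. The long exact sequence of the fibration, or the semidirect product description below, gives the same conclusion at the level of $\pi_1$.

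For the center, we use the fixed point $x_0:=h(v)$ of $\rho$ as basepoint for $W$. Since $\rho(x_0)=x_0$, the loop $t\mapsto[(x_0,t)]$, $t\in[0,1]$, is a closed loop in $T(\rho)$. It defines a splitting of $\pi_1(T(\rho))\to\Z$, so
\beq
\pi_1(T(\rho))\cong\Gamma\rtimes_{\rho_*}\Z,\qquad \Gamma:=\pi_1(W,x_0),
\eeq
where the generator $s$ of $\Z$ acts by conjugation through the honest automorphism $\rho_*$ of $\Gamma$. We are using that the basepoint is fixed, so no inner-automorphism ambiguity arises. Because $\rho^k=\id$ as maps fixing $x_0$, we get $\rho_*^k=\id_\Gamma$. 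The element $s^k$ therefore commutes with every $\gamma\in\Gamma$, since $s^k\gamma s^{-k}=\rho_*^k(\gamma)=\gamma$, and it trivially commutes with $s$. Thus $s^k\in\mathrm Z(\pi_1(T(\rho)))$, and it is nontrivial because it maps to $k\neq0$ in $\Z$. Equivalently, $s^k$ is the homotopy class of the circle orbit $\{x_0\}\times\R/k\Z$ of the $S^1=\R/k\Z$-action on $T(\rho)$ given by translating the $\R$-coordinate.

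The only point needing care is that $\rho_*$ is genuinely of finite order on $\pi_1(W,x_0)$, and not merely in $\mathrm{Out}$. This is exactly where the fixed point produced by functoriality of hyperbolization in Section~\ref{sec:construction} is used. Without a fixed point, $\rho^k=\id$ would only give $\rho_*^k$ equal to conjugation by some element, and the argument would need a different candidate central element. With $x_0$ fixed, everything is formal, so I expect no real obstacle beyond citing asphericity of $h(K)$ from \cite{dj}.
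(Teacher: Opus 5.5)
Your proposal is correct and follows essentially the same argument as the paper: asphericity comes from the bundle $W\to T(\rho)\to S^1$ with aspherical fiber and base. Centrality of $t^k$ in $\pi_1(W)\rtimes_{\rho_*}\langle t\rangle$ comes from basing at a $\rho$-fixed point, so that $\rho_*^k=\id$ holds on the nose. Your extra remarks, namely that $t^k$ is nontrivial because it maps to $k\neq 0$ in $\Z$ and that it is the orbit class of the $\R/k\Z$-action, are correct refinements that the paper leaves implicit.
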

\begin{proof}
Asphericity follows directly from the fact that the mapping-torus projection $T(\rho)\to S^1$ is a fiber bundle with fiber $W$, and both $S^1$ and $W$ are aspherical. For the second part of the statement, choose a fixed point $y_0\in W$ of $\rho$. Using $y_0$ as basepoint, Seifert--van Kampen's theorem gives
\beq
\pi_1(T(\rho))\cong\pi_1(W)\rtimes_{\rho_*}\langle t\rangle.
\eeq
where $t$ is the stable letter represented by the loop going once around the base circle. Since $\rho(y_0)=y_0$ and $\rho^k=\id$, the induced automorphism satisfies $\rho_*^k=\id$. It follows that
\beq
t^kgt^{-k}=\rho_*^k(g)=g
\eeq
for all $g\in\pi_1(W)$. The nontrivial element $t^k$ also commutes with $t$, so it lies in $\mathrm Z(\pi_1(T(\rho)))$. 
\end{proof}
\subsection{Finite cyclic group actions and hyperbolization}
Recall that $T(\rho)$ is the mapping torus of $\rho:W\to W$. Note that since $\rho$ has order $k$, the cyclic group $C_k$ of order $k$ acts on $T(\rho)$ by
\beq
\widehat\rho([x,t])=[\rho(x),t].
\eeq
This map is isotopic to the identity and so it defines a homomorphism $C_k\to\Diff_h(T(\rho))$ which induces a map $\beta:BC_k\to B\Diff_h(T(\rho))$ between classifying spaces.

Let
\beq
p_\rho:EC_k\times_{C_k}T(\rho)\to BC_k
\eeq
be the associated smooth $T(\rho)$-bundle. Hence, for every polynomial $\mathfrak{c}\in H^*(B\On;\F_2)$ in the universal Stiefel--Whitney classes, we have
\beq
\beta^*\kappa_{\mathfrak{c}}=(p_\rho)_!\left(\mathfrak{c}(\msf{T}_v(p_\rho))\right).
\eeq
We now use properties of the hyperbolization to reduce the calculation of the right-hand side to the corresponding bundle with fiber $T(\phi)$, where, as before, the map
$\phi:N\to N$ is orientation preserving of order $k$. Note that there is also an orientation preserving, order $k$ diffeomorphism $\widehat \phi:T(\phi)\to T(\phi)$, sending $[q,t]\in T(\phi)$ to $[\phi(q),t]$. Because of the identity $h_K\circ\rho=\phi\circ h_K$, the map $\varphi:T(\rho)\to T(\phi)$ given by $\varphi([x,t])=[h_K(x),t]$ is equivariant with respect to these $C_k$-actions. Applying the Borel construction therefore gives a map
\beq
\overline\varphi:EC_k\times_{C_k}T(\rho)\to EC_k\times_{C_k}T(\phi)
\eeq
over $BC_k$. Let $p_\phi:EC_k\times_{C_k}T(\phi)\to BC_k$ denote the corresponding $T(\phi)$-bundle.
\begin{lemma}\label{lem:sw-Dold}
The map $\overline\varphi$ has degree one on every fiber. Moreover, the Stiefel--Whitney classes of $\msf{T}_v(p_{\rho})$ and $\msf{T}_v(p_\phi)$ satisfy
\beq
w_i(\msf{T}_v(p_\rho))=\overline\varphi^*w_i(\msf{T}_v(p_\phi))
\eeq
for all $i$.
\end{lemma}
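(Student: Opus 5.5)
The argument has two parts: a fiberwise degree computation, and an identification of vertical tangent bundles that reduces to the $C_k$-equivariant stable tangential isomorphism of Lemma \ref{lem:natural-tangent}.

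\emph{Degree one on fibers.} The fiber of $p_\rho$ over a point of $BC_k$ is $T(\rho)$, and over the same point the fiber of $p_\phi$ is $T(\phi)$. Under these identifications, $\overline\varphi$ restricts to $\varphi$, up to the action of some element of $C_k$ on both sides. Since $\varphi$ is equivariant and the actions preserve orientation, it suffices to show $\deg\varphi=1$. The map $\varphi$ covers the identity of $S^1$ under the mapping-torus projections. Over a regular value $[y,t]$ with $t\in(0,1)$, its local degree is computed in the slice $W\times\{t\}\to N\times\{t\}$, where it is just $h_K$. Hence $\deg\varphi=\deg h_K=1$, with orientations on $T(\rho)$ and $T(\phi)$ induced from $W$, $N$ and $S^1$.

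\emph{Vertical tangent bundles.} Write $\msf{T}_v(p_\rho)=EC_k\times_{C_k}T\,T(\rho)$, and similarly for $\phi$. Then it suffices to produce a $C_k$-equivariant stable isomorphism
\beq
T\,T(\rho)\oplus\epsilon^n\cong\varphi^*T\,T(\phi)\oplus\epsilon^n,
\eeq
where $C_k$ acts trivially on the trivial summands. Applying the Borel construction to this isomorphism gives a stable isomorphism between $\msf{T}_v(p_\rho)$ and $\overline\varphi^*\msf{T}_v(p_\phi)$, up to trivial bundles. Stiefel--Whitney classes are stable invariants, so $w_i(\msf{T}_v(p_\rho))=\overline\varphi^*w_i(\msf{T}_v(p_\phi))$ follows.

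To build the equivariant isomorphism, note that $T\,T(\rho)\cong \overline{TW}\oplus\epsilon^1$. Here $\overline{TW}$ is the bundle $TW\times\R/\sim$ glued by $D\rho$, and the $\epsilon^1$ is spanned by $\partial_t$; this splitting is preserved by the $C_k$-action $\widehat\rho$. Similarly $T\,T(\phi)\cong\overline{TN}\oplus\epsilon^1$. Apply Lemma \ref{lem:natural-tangent} with $G=C_k$ acting on $K$ through $\phi$. This gives a $C_k$-equivariant isomorphism $TW\oplus\epsilon^n\cong h_K^*TN\oplus\epsilon^n$ over $W$, where the action on $TW$ is by $D\rho$, on $h_K^*TN$ by $D\phi$, and trivial on $\epsilon^n$. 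Because this isomorphism commutes with $\rho$, it is compatible with the gluing $(x,t)\sim(\rho(x),t+1)$. It therefore descends to an isomorphism $\overline{TW}\oplus\epsilon^n\cong\varphi^*\overline{TN}\oplus\epsilon^n$ over $T(\rho)$. The trivial summand descends to a trivial bundle precisely because the action on $\epsilon^n$ is trivial. The descended isomorphism is again equivariant for $\widehat\rho$ and $\widehat\phi$, by the same commutation with $\rho$.

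\emph{Main obstacle.} The key point is that the single isomorphism from Lemma \ref{lem:natural-tangent} must serve two roles: it is used both for the mapping-torus gluing and for the $C_k$-action. The two roles coincide here because both are generated by $\rho$, which is exactly why the lemma's averaging construction is needed. The remaining checks are routine: that the $\partial_t$ summands match under $\varphi$ (they do, since $\varphi$ is the identity in $t$), and that $\epsilon^n$ with trivial action Borel-constructs to a trivial bundle.
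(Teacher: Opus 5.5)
Your proposal is correct and follows essentially the same route as the paper. The paper also argues in two steps:
\begin{itemize}
\item $\varphi$ has degree one because it is the fiberwise map $h_K$ over $S^1$;
\item the vertical tangent bundles are compared by descending the $C_k$-equivariant stable isomorphism of Lemma~\ref{lem:natural-tangent} to the mapping tori, adding the $\partial_t$ line, and applying the Borel construction.
\end{itemize}
You additionally spell out the descent and equivariance checks, which the paper leaves implicit.
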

\begin{proof}
The map $\overline\varphi$ restricts on every fiber to $\varphi:T(\rho)\to T(\phi)$. This map has degree one because it is a map of mapping tori over $S^1$ whose restriction to each fiber is the degree-one map $h_K:W\to\N$.

By Lemma \ref{lem:natural-tangent}, the stable tangent bundle isomorphism for $h_K$ may be chosen equivariantly with respect to $\rho$ and $\phi$. Taking mapping tori and adding the tangent line in the $S^1$-direction gives a stable tangent bundle isomorphism for $\varphi$ which is equivariant with respect to $\widehat\rho$ and $\widehat \phi$. After applying the Borel construction, this gives an isomorphism of stable vertical tangent bundles 
\beq
\msf{T}_v(p_\rho)\oplus\epsilon^s
\cong
\overline\varphi^*\msf{T}_v(p_\phi)\oplus\epsilon^s
\eeq
for some $s\geq0$. The result follows by stability and naturality of the Stiefel--Whitney classes.
\end{proof}
Since $\overline\varphi$ has degree one on every fiber, integration along the fiber satisfies
\beq
(p_\rho)_!\bigl(\overline\varphi^*z\bigr)=(p_\phi)_!(z)
\eeq
for every $z\in H^*(EC_k\times_{C_k}T(\phi);\F_2)$. Thus we obtain the following corollary.
\begin{corollary}\label{cor:beta}
Let $\mathfrak{c}$ be a polynomial in the Stiefel--Whitney classes. Then
\beq
\beta^*\kappa_{\mathfrak{c}}=(p_\phi)_!\left(\mathfrak{c}(\msf{T}_v(p_\phi))\right).
\eeq
\end{corollary}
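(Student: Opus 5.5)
The corollary will follow by combining the formula for $\beta^*\kappa_{\mathfrak{c}}$ with Lemma \ref{lem:sw-Dold} and the projection-type identity for fiber integration along $\overline\varphi$. The chain of equalities we aim for is
\beq
\beta^*\kappa_{\mathfrak{c}}=(p_\rho)_!\bigl(\mathfrak{c}(\msf{T}_v(p_\rho))\bigr)=(p_\rho)_!\bigl(\overline\varphi^*\mathfrak{c}(\msf{T}_v(p_\phi))\bigr)=(p_\phi)_!\bigl(\mathfrak{c}(\msf{T}_v(p_\phi))\bigr).
\eeq

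\emph{First equality.} This is the naturality of tautological classes under pullback of bundles. The bundle $p_\rho$ is the pullback of the universal $T(\rho)$-bundle along $\beta$, and the vertical tangent bundle pulls back accordingly. Fiber integration commutes with pullback along maps of bases, so this equality holds; it was already recorded above.

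\emph{Second equality.} Since $\mathfrak{c}$ is a polynomial in the $w_i$, we apply Lemma \ref{lem:sw-Dold} termwise. Because $\overline\varphi^*$ is a ring homomorphism, this gives $\mathfrak{c}(\msf{T}_v(p_\rho))=\overline\varphi^*\mathfrak{c}(\msf{T}_v(p_\phi))$.

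\emph{Third equality.} This is the identity $(p_\rho)_!\circ\overline\varphi^*=(p_\phi)_!$, stated before the corollary, applied to $z=\mathfrak{c}(\msf{T}_v(p_\phi))$. It is the only point needing justification, and I would argue it as follows.

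Fiber integration for a bundle with closed fiber of dimension $d$ can be described, with $\F_2$ coefficients, as the composite of three maps:
\begin{enumerate}[(i)]
\item the Serre spectral sequence edge map $H^{*+d}(E)\to E_\infty^{*,d}$;
\item the inclusion $E_\infty^{*,d}\subset E_2^{*,d}$, which holds because $d$ is the top row;
\item the identification $E_2^{*,d}=H^*(B;\mathcal{H}^d(F))\cong H^*(B)$.
\end{enumerate}
Since $\overline\varphi$ is a map over $BC_k$, it induces a morphism of Serre spectral sequences. On the top row, that morphism is induced by $\varphi^*:H^d(T(\phi);\F_2)\to H^d(T(\rho);\F_2)$, which is multiplication by $\deg\varphi=1$ because the map has degree one on every fiber. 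Compatibility of the edge maps then gives the identity.

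Alternatively, one can use Becker–Gottlieb/Umkehr maps. For a fiberwise degree-one map, the composite $(\overline\varphi)_!\circ\overline\varphi^*$ is the identity, and $(p_\rho)_!=(p_\phi)_!\circ(\overline\varphi)_!$.

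The main (mild) obstacle is making sure the top-row identification is compatible for both bundles, i.e.\ that the local systems $\mathcal{H}^d$ are trivial and that $\varphi^*$ respects the chosen fundamental classes. With $\F_2$ coefficients the local systems are automatically trivial, and degree one mod $2$ suffices, so no orientation issues arise.
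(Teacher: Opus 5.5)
Your proposal is correct and follows the paper's argument: you chain the naturality formula for $\beta^*\kappa_{\mathfrak{c}}$, Lemma~\ref{lem:sw-Dold} (extended to polynomials because $\overline\varphi^*$ is a ring homomorphism), and the identity $(p_\rho)_!\circ\overline\varphi^*=(p_\phi)_!$ for a map that has degree one on every fiber, exactly as the paper does. The paper only asserts that last identity, so your Serre spectral sequence justification (edge map onto the top row, with trivial $\F_2$ local systems and degree one mod $2$) adds detail rather than taking a different route.
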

Thus the problem of computing the tautological classes for $p_\rho:EC_k\times_{C_k}T(\rho)\to BC_k$ has been reduced to calculating the tautological classes of the bundle
$p_\phi:EC_k\times_{C_k}T(\phi)\to BC_k$.
\subsection{Tautological classes and equivariant localization}\label{sec:localization}
We now restrict to the case when $k=2$. To compute the tautological classes of $p_\phi:EC_2\times_{C_2}T(\phi)\to BC_2$, we use the technique called ``equivariant localization''. This technique is presented in \cite{allday-puppe} and we refer the reader there for details and further references. Chapters 3 and 5 of that book are especially useful here. We recall the main facts. 

For a closed oriented smooth manifold $X$ with an orientation preserving action of $C_2$, write $p_X:X//C_2\to BC_2$, where $X//C_2:=EC_2\times_{C_2}X$ and $H^*_{C_2}(X):=H^*(X//C_2;\F_2)$.
In particular,
\beq
H^*_{C_2}(\pt)=H^*(BC_2;\F_2)\cong\F_2[t],
\eeq
with $|t|=1$.

Let $F_1,\ldots,F_\ell$ be the connected components of the fixed point set $X^{C_2}$, and let
$i_j:F_j//C_2\to X//C_2$ denote the corresponding inclusions. Since $C_2$ acts trivially on $F_j$, one has
\beq
F_j//C_2\cong F_j\times BC_2.
\eeq
Let $\pi_j:F_j\times BC_2\to BC_2$ be the projection. The fiber-integration map
\beq
(\pi_j)_!:H^*_{C_2}(F_j)\to H^{*-\dim(F_j)}(BC_2;\F_2)
\eeq
is therefore given by slant product with the mod $2$ fundamental class of $F_j$.

Let $\nu_j\to F_j$ be the normal bundle of $F_j\subset X$, equipped with the $C_2$-action. Its Borel construction is the vector bundle
\beq
\nu_j^{C_2}:=\nu_j//C_2\to F_j//C_2=F_j\times BC_2.
\eeq
If $r_j$ denotes the rank of $\nu_j$, its \textit{equivariant} mod $p$ \textit{Euler}
class is defined as
\beq
e_{C_2}(\nu_j):=w_{r_j}(\nu_j^{C_2})\in H^{r_j}_{C_2}(F_j).
\eeq
Let $S=\{1,t,t^2,\ldots\}\subset\F_2[t]$. The localization theorem in equivariant cohomology, as recalled in \cite[Theorem 3.1.6]{allday-puppe} says that the restriction map
\beq
\bigoplus_{j=1}^{\ell}i_j^*:S^{-1}H^*_{C_2}(X)\to\bigoplus_{j=1}^{\ell}S^{-1}H^*_{C_2}(F_j)
\eeq
is an isomorphism. Moreover, each class $e_{C_2}(\nu_j)$ becomes a unit after localization. Using the formula $i_j^*(i_j)_!(x)=e_{C_2}(\nu_j)x$, for every $x\in H^*_{C_2}(F_j)$, one obtains the following localization formula (see \cite[Section 5.3]{allday-puppe}): if $z\in H^*_{C_2}(X)$, then, after inverting $t$, we have
\beq
z=\sum_{j=1}^{\ell}(i_j)_!\left(\frac{i_j^*z}{e_{C_2}(\nu_j)}\right).
\eeq
Since $p_X\circ i_j=\pi_j$, functoriality of the Gysin maps gives
\begin{equation}\label{eq:loc-formula}
(p_X)_!(z)=\sum_{j=1}^{\ell}(\pi_j)_!\left(\frac{i_j^*z}{e_{C_2}(\nu_j)}\right)\in S^{-1}H^*(BC_2;\F_2)\cong\F_2[t,t^{-1}].
\end{equation} 
\section{Mapping torus of the hyperbolization of complex conjugation}\label{sec:computations}
The results of Section \ref{sec:construction} reduce the proofs of Theorems \ref{thm:tautological} and \ref{thm:bordism} to finding mapping tori with finite-order orientation preserving monodromy and with plenty of characteristic classes. Dold manifolds provide such examples. They were introduced by Dold in \cite{dold} to construct explicit generators of the unoriented bordism ring. Dold manifolds, typically denoted by $P(n,m)$, are quotients of the form $S^n\times_{C_2}\C P^m$ where $C_2$ acts on the $n$-sphere $S^n$ by the antipodal map and on the complex projective space $\C P^m$ by complex conjugation. When $n=1$, the Dold manifold $P(1,m)$ will be denoted by $P_m$. It is the mapping torus of complex conjugation $c:[z_0:\cdots:z_m]\mapsto[\overline{z_0}:\cdots:\overline{z_m}]$ which is an order $2$ diffeomorphism of $\C P^m$. As discussed in Section \ref{sec:construction}, the manifold $P_m$ inherits an action of $C_2$ and we denote by
\beq
p_D:EC_2\times_{C_2}P_m\to BC_2
\eeq
the corresponding Dold manifold bundle. 
\subsection{The fixed point set $P_m^{C_2}$}
To apply the technique of equivariant localization as just described, it remains to identify the fixed point set of the $C_2$-action on the Dold manifold $P_m$, its equivariant normal bundle, and the restriction of $\msf{T}_v(p_D)$ to the fixed point set. Using the identification $P_m=(S^1\times\C P^m)/((v,q)\sim(-v,c(q)))$, we can identify the fixed point set as
\beq
P_m^{C_2}=\bigl(\R P^m\times S^1\bigr)/\bigl((\ell,v)\sim(\ell,-v)\bigr)\cong\R P^m\times S^1.
\eeq
Let $\nu(P_m^{C_2})\to P_m^{C_2}$ be the normal bundle of the inclusion $P_m^{C_2}\subset P_m$, equipped with the action induced by the $C_2$-action on $P_m$. Denote by $\mu\to S^1$ the nontrivial line bundle over the circle. Also denote by $\operatorname{pr}_{\R P^m}$ and $\operatorname{pr}_{S^1}$ the projections of $\R P^m\times S^1$ onto each of the factors.
\begin{lemma}\label{lem:normal-fixedpts}
There is an isomorphism of vector bundles over $P_m^{C_2}=\R P^m\times S^1$
\beq
\nu(P_m^{C_2})\cong\operatorname{pr}_{\R P^m}^*T\R P^m\otimes\operatorname{pr}_{S^1}^*\mu.
\eeq
\end{lemma}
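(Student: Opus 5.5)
We first identify the normal bundle before the quotient and then track what the gluing does to it. Work on the cover $S^1\times\C P^m$. The $C_2$-action on $P_m$ is induced by $(v,q)\mapsto(v,c(q))$, which commutes with the deck involution $(v,q)\mapsto(-v,c(q))$. Upstairs its fixed set is $S^1\times\R P^m$, where $\R P^m=\mathrm{Fix}(c)\subset\C P^m$. The normal bundle of $\R P^m\subset\C P^m$ is the $(-1)$-eigenbundle of $dc$ on $T\C P^m|_{\R P^m}$. Because $c$ is antiholomorphic, $dc$ anticommutes with the complex structure $J$. Hence $J$ maps the $(+1)$-eigenbundle $T\R P^m$ isomorphically onto the $(-1)$-eigenbundle, and this gives the standard isomorphism $\nu(\R P^m\subset\C P^m)\cong T\R P^m$ via $X\mapsto JX$. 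So upstairs the normal bundle is $\operatorname{pr}^*T\R P^m$, on which the $C_2$-action acts by $-1$.

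Next we pass to the quotient by the deck involution $\tau(v,\ell)=(-v,\ell)$. This map is free on the fixed set, and its quotient is $(S^1/\pm)\times\R P^m\cong S^1\times\R P^m$. The normal bundle downstairs is the quotient of the upstairs normal bundle by $d\tau=(-1)\times dc$. The key computation is the action of $dc$ on a normal vector $JX$ at a point $\ell\in\R P^m$: we get $dc(JX)=-J\,dc(X)=-JX$. Under the identification $\nu\cong\operatorname{pr}^*T\R P^m$, $X\mapsto JX$, the deck transformation therefore acts on fibers as $X\mapsto -X$, covering $v\mapsto -v$. So the quotient bundle is $\bigl(S^1\times T\R P^m\bigr)/\bigl((v,X)\sim(-v,-X)\bigr)$. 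Writing $S^1\to S^1/\pm$ as the double cover, this bundle is exactly $\operatorname{pr}_{\R P^m}^*T\R P^m\otimes\operatorname{pr}_{S^1}^*\mu$. Here $\mu=(S^1\times\R)/((v,s)\sim(-v,-s))$ is the Möbius line bundle, and the tensor product is realized fiberwise by $X\otimes s\mapsto sX$.

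The step that needs care is the consistency of identifications. The isomorphism $X\mapsto JX$ must be compatible with $d\tau$, and the correct sign must emerge. In particular, we must not accidentally obtain the untwisted bundle by composing the action of $c$ on the base $\R P^m$, which is trivial there, with the sign on normal vectors. I would verify this sign by an explicit local computation at $[1:0:\cdots:0]$ in the affine chart $z\mapsto\bar z$ on $\C^m$. There the tangent vectors of $\R P^m$ are real vectors $x$, the normal vectors are $ix$, and conjugation sends $ix\mapsto -ix$. This confirms the twist by $\mu$.
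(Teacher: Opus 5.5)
Your proposal is correct and follows essentially the same route as the paper: you identify $\nu(\R P^m\subset\C P^m)$ with $T\R P^m$ via multiplication by $i$, note that $Dc$ acts by $-1$ on $J(T\R P^m)$ because $c$ fixes $\R P^m$ and anticommutes with $i$, and conclude that the normal bundle of the fixed set is $(S^1\times T\R P^m)/((v,A)\sim(-v,-A))\cong\operatorname{pr}_{\R P^m}^*T\R P^m\otimes\operatorname{pr}_{S^1}^*\mu$. The only difference is presentational: the paper writes down the isomorphism $[v,J_\ell(A)]\mapsto A\otimes[v,1]$ directly, while you phrase the argument through the double cover $S^1\times\C P^m$ and its deck involution.
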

\begin{proof}
First recall that multiplication by the imaginary unit $i$ gives an isomorphism $J:T\R P^m\to\nu(\R P^m)$, where $\nu(\R P^m)$ is the normal bundle of the canonical inclusion $\R P^m\subset\C P^m$. Since $c$ restricts to the identity on $\R P^m$ and complex conjugation anticommutes with multiplication by $i$, we have that the derivative of $c$ at $\ell\in\R P^m$ satisfies $Dc_\ell(J_\ell(A))=-J_\ell(A)$ for every $A\in T_\ell\R P^m$. It follows that
\beq
\nu(P_m^{C_2})=\bigl(S^1\times\nu(\R P^m)\bigr)/\bigl((v,J_\ell(A))\sim(-v,-J_\ell(A))\bigr).
\eeq
Therefore the map
\beq
\Phi:\nu(P_m^{C_2})\to\operatorname{pr}_{\R P^m}^*T\R P^m\otimes\operatorname{pr}_{S^1}^*\mu
\eeq
given by $\Phi\bigl([v,J_\ell(A)]\bigr)=A\otimes[v,1]$ is a well-defined isomorphism of vector bundles.
\end{proof}
Let now $\lambda\to BC_2$ be the line bundle induced by the sign representation $C_2\to\GL_1(\R)$. 
\begin{lemma}\label{lem:C2-normal-fixedpts}
There is an isomorphism of vector bundles over $P_m^{C_2}\times BC_2=\R P^m\times S^1\times BC_2$
\beq
\nu^{C_2}(P_m^{C_2}):=EC_2\times_{C_2}\nu(P_m^{C_2})\cong\operatorname{pr}_{\R P^m}^*T\R P^m\otimes\operatorname{pr}_{S^1}^*\mu\otimes\operatorname{pr}_{BC_2}^*\lambda.
\eeq
\end{lemma}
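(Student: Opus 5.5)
The strategy is to analyse how $C_2$ acts on the fibers of $\nu(P_m^{C_2})$ through the identification of Lemma \ref{lem:normal-fixedpts}, and then apply the Borel construction. The $C_2$-action on $P_m$ is $\widehat c([v,q])=[v,c(q)]$, and it fixes $P_m^{C_2}$ pointwise. Its derivative therefore gives a fiberwise linear involution of the normal bundle $\nu(P_m^{C_2})$ covering the identity. The key point is to show that this involution is $-1$ on every fiber.

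In the model of Lemma \ref{lem:normal-fixedpts}, a normal vector is $[v,J_\ell(A)]$, and its image is $[v,Dc_\ell(J_\ell(A))]=[v,-J_\ell(A)]$, by the identity $Dc_\ell\circ J_\ell=-J_\ell$ established in that proof. Under $\Phi$ this image goes to $-A\otimes[v,1]$. So the transported action on $\operatorname{pr}_{\R P^m}^*T\R P^m\otimes\operatorname{pr}_{S^1}^*\mu$ is multiplication by $-1$. In other words, $\Phi$ is a $C_2$-equivariant isomorphism $\nu(P_m^{C_2})\cong E\otimes\R_{\sgn}$. Here $E:=\operatorname{pr}_{\R P^m}^*T\R P^m\otimes\operatorname{pr}_{S^1}^*\mu$ carries the trivial action, and $\R_{\sgn}$ is the sign representation, viewed as a $C_2$-bundle over a point. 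I will also check briefly that the relation $(v,J_\ell(A))\sim(-v,-J_\ell(A))$ is compatible with the action, which it is because $-1$ commutes with everything.

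Next I apply the Borel construction, which is functorial and sends equivariant isomorphisms to isomorphisms of vector bundles over $EC_2\times_{C_2}P_m^{C_2}=P_m^{C_2}\times BC_2$. For a trivial-action bundle $E$, one has $EC_2\times_{C_2}E=\operatorname{pr}^*E$. By definition, $EC_2\times_{C_2}\R_{\sgn}=\lambda$. The Borel construction also turns tensor products of equivariant bundles over a trivial $C_2$-space into tensor products, because $EC_2\times_{C_2}(E\otimes\R_{\sgn})$ is the quotient of $EC_2\times E\otimes\R$ by the diagonal action, and this identifies fiberwise with $\operatorname{pr}^*E\otimes\operatorname{pr}_{BC_2}^*\lambda$. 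Substituting the formula for $E$ gives the claimed isomorphism.

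I expect the only real subtlety to be the first step: verifying that the transported action is exactly $-\mathrm{id}$, and not something that interacts with the $\mu$-twist. The twist in $\mu$ comes from $v\mapsto -v$, which is a different involution from the $C_2$-action, and the $C_2$-action fixes $v$. So the action should only see the sign coming from $Dc$, and the check amounts to rereading the proof of Lemma \ref{lem:normal-fixedpts} equivariantly.
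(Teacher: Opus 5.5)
Your proposal is correct and follows the paper's proof. The paper also uses $Dc_\ell\circ J_\ell=-J_\ell$ and the isomorphism $\Phi$ to see that $C_2$ acts by $-1$ on every normal direction, and then writes down $\Psi([e,[v,J_\ell(A)]])=A\otimes[v,1]\otimes[e,1]$ directly, which is exactly the map your Borel-construction argument for the equivariant bundle $E\otimes\R_{\mathrm{sgn}}$ produces.
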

\begin{proof}
Let $g$ denote the nontrivial element of $C_2$. Since the action of $C_2$ on $P_m$ is induced by complex conjugation on $\C P^m$, its action on the normal bundle of $P_m^{C_2}$ is given by 
\beq
g\cdot[v,J_\ell(A)]=[v,-J_\ell(A)]=[v,J_\ell(-A)].
\eeq
Under the isomorphism $\Phi$ of Lemma \ref{lem:normal-fixedpts}, this gives 
\beq
\Phi\bigl(g\cdot[v,J_\ell(A)]\bigr)=-A\otimes[v,1]=-\Phi\bigl([v,J_\ell(A)]\bigr).
\eeq
Thus $C_2$ acts by multiplication by $-1$ on every normal direction. Define
\beq
\Psi:EC_2\times_{C_2}\nu(P_m^{C_2})\to\operatorname{pr}_{\R P^m}^*T\R P^m\otimes\operatorname{pr}_{S^1}^*\mu\otimes\operatorname{pr}_{BC_2}^*\lambda
\eeq
by $\Psi\bigl([e,[v,J_\ell(A)]]\bigr)=A\otimes[v,1]\otimes[e,1]$.
A direct check shows that this is a well-defined isomorphism of vector bundles.
\end{proof}
\subsection{The equivariant Euler class}
We now turn to the calculation of the equivariant Euler class. Let us fix generators $u\in H^1(\R P^m)$, $s\in H^1(S^1)$, $t=w_1(\lambda)\in H^1(BC_2)$. We use the same notation for their pullbacks to $\R P^m\times S^1\times BC_2$. Thus there is a ring isomorphism
\beq
H^*(P_m^{C_2}\times BC_2;\F_2)\cong\F_2[u,s,t]/(u^{m+1},s^2).
\eeq
\begin{lemma}\label{lem:euler-normal-fixedpts}
The equivariant Euler class of the normal bundle of $P_m^{C_2}\subset P_m$ is
\beq
e_{C_2}(\nu(P_m^{C_2}))=\sum_{j=0}^m\binom{m+1}{j}u^j(s+t)^{m-j},
\eeq
where the binomial coefficients are reduced modulo $2$.
\end{lemma}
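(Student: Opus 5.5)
By Lemma \ref{lem:C2-normal-fixedpts}, the equivariant normal bundle is $E\otimes L$. Here $E=\operatorname{pr}_{\R P^m}^*T\R P^m$ has rank $m$, and $L=\operatorname{pr}_{S^1}^*\mu\otimes\operatorname{pr}_{BC_2}^*\lambda$ is a line bundle. Since $w_1$ is additive on tensor products of line bundles, and $w_1(\mu)=s$, $w_1(\lambda)=t$, we get $w_1(L)=s+t$. By definition the equivariant Euler class is the top Stiefel--Whitney class $w_m(E\otimes L)$, so the task reduces to computing this class.

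To compute $w_m(E\otimes L)$, I would use the Euler sequence for $\R P^m$: $T\R P^m\oplus\epsilon^1\cong(m+1)\gamma$, where $\gamma$ is the tautological line bundle with $w_1(\gamma)=u$. Tensoring with $L$ gives
\beq
(E\otimes L)\oplus L\cong(m+1)(\gamma\otimes L).
\eeq
By the splitting principle and the Whitney formula, this yields the identity of total classes
\beq
w(E\otimes L)\,(1+s+t)=(1+u+s+t)^{m+1}.
\eeq

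Now extract the degree-$m$ part. Write $x=s+t$, so $1+x$ is a unit in the power series completion, and $w(E\otimes L)=(1+x+u)^{m+1}(1+x)^{-1}$. Expanding $(1+x+u)^{m+1}=\sum_j\binom{m+1}{j}u^j(1+x)^{m+1-j}$ gives
\beq
w(E\otimes L)=\sum_{j}\binom{m+1}{j}u^j(1+x)^{m-j}.
\eeq
The term with $j=m+1$ vanishes because $u^{m+1}=0$, so every exponent $m-j$ is nonnegative. The degree-$m$ component of $u^j(1+x)^{m-j}$ is $u^jx^{m-j}$, since we need $j$ powers of $u$ and $m-j$ powers of $x$, and the latter occurs with coefficient $\binom{m-j}{m-j}=1$. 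Summing over $j$ gives
\beq
w_m(E\otimes L)=\sum_{j=0}^m\binom{m+1}{j}u^j(s+t)^{m-j},
\eeq
which is the stated formula.

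The one point that needs care is the division by $1+s+t$. I would justify it directly rather than by appeal to completions. In $H^*(\R P^m\times S^1\times BC_2;\F_2)$ the element $1+s+t$ is invertible in the degree-completed ring, with inverse $\sum_k (s+t)^k$. The Whitney identity holds in each degree, so multiplying both sides by this inverse and comparing components degree by degree is legitimate. Alternatively, one can avoid division altogether: check that the candidate total class above satisfies the identity after multiplication by $1+x$. That check is immediate, because $(1+x)\sum_j\binom{m+1}{j}u^j(1+x)^{m-j}$ is exactly the binomial expansion of $(1+x+u)^{m+1}$. Since multiplication by $1+x$ is injective on these graded rings, this determines $w(E\otimes L)$.
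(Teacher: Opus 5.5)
Your argument is correct; it differs from the paper's only in how the top class of the twisted bundle is extracted.

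The paper works with the top class directly. For a rank-$m$ bundle $E$ and a line bundle $L$, the splitting principle gives $w_m(E\otimes L)=\sum_{j=0}^m w_j(E)\,w_1(L)^{m-j}$. Substituting $w_j(T\R P^m)=\binom{m+1}{j}u^j$ and $w_1(L)=s+t$ finishes the proof in one line.

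You instead tensor the Euler sequence $T\R P^m\oplus\epsilon^1\cong(m+1)\gamma$ with $L$. This gives the identity of total classes $w(E\otimes L)(1+s+t)=(1+u+s+t)^{m+1}$, and you then divide by $1+s+t$. The division costs some bookkeeping, which you handle correctly. Your second justification, via injectivity of multiplication by $1+s+t$, avoids completions altogether.

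In return your route yields more than the lemma. It gives the entire total Stiefel--Whitney class of $\nu^{C_2}(P_m^{C_2})$. From the degree-$(m+1)$ part together with $w_{m+1}(E\otimes L)=0$, it also gives the relation $(s+t)\,e=(u+s+t)^{m+1}$. The paper establishes these facts separately later: the total class in the proof of Lemma \ref{lem:vertical-tangent-fixedpts}, by this same Euler-sequence computation, and the relation $\theta e=(u+\theta)^{m+1}$ at the start of the proof of Lemma \ref{lem:Dold-tautological-class}. So your route front-loads work the paper needs anyway, at the price of a slightly longer proof of this particular lemma.
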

\begin{proof}
By definition, the equivariant mod $2$ Euler class of $\nu(P_m^{C_2})$ is the mod $2$ Euler class of $\nu^{C_2}(P_m^{C_2})$. Since this bundle has rank $m$, this is its $m$th Stiefel--Whitney class. By Lemmas \ref{lem:normal-fixedpts} and \ref{lem:C2-normal-fixedpts}, it is obtained by tensoring $\operatorname{pr}_{\R P^m}^*T\R P^m$ with the line bundle $\operatorname{pr}_{S^1}^*\mu\otimes\operatorname{pr}_{BC_2}^*\lambda$ whose first Stiefel--Whitney class is $s+t$. Hence
\beq
e_{C_2}(\nu(P_m^{C_2}))=\sum_{j=0}^m w_j(T\R P^m)(s+t)^{m-j}.
\eeq
But $w_j(T\R P^m)=\binom{m+1}{j}u^j$, which gives the claimed expression.
\end{proof}
\begin{remark}
Note that $e_{C_2}(\nu(P_m^{C_2}))\equiv t^m$ modulo the nilpotent ideal $(u,s)$. Hence, after inverting $t$, this Euler class is a unit in $H^*(P_m^{C_2};\F_2)[t,t^{-1}]$, as expected.
\end{remark}
Let $i:P_m^{C_2}\times BC_2=P_m^{C_2}//C_2\to P_m//C_2$ be the inclusion of the fixed point set, and put $\theta=s+t$ and
\beq
e:=e_{C_2}(\nu(P_m^{C_2}))=\sum_{j=0}^m\binom{m+1}{j}u^j\theta^{m-j}.
\eeq
For the next lemma we need the following setup. Let $\xi\to B$ be a real vector bundle over a topological space $B$, and let $v(\xi)=1+v_1(\xi)+v_2(\xi)+\cdots$ be its total Wu class. Let $\alpha,\beta\in H^1(B;\F_2)$, and assume that there is a natural number $m$ such that $\alpha^{m+1}=0$. Furthermore, suppose that $\xi$ satisfies
\beq
w(\xi)(1+\beta)=(1+\alpha)^{m+1}(1+\alpha+\beta)^{m+1}.
\eeq
\begin{lemma}\label{lem:Wu-class}
The $m$-th Wu class of $\xi$ is
\beq
v_m(\xi)=\sum_{j=0}^m\binom{m+1}{j}\alpha^j\beta^{m-j}.
\eeq
\end{lemma}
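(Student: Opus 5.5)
The plan is to solve the defining identity $\Sq(v(\xi))=w(\xi)$ directly, working in the subring generated by $\alpha,\beta$. We will guess a closed form for the total Wu class and check it. Since $\Sq$ is a ring homomorphism, and $\Sq(x)=x+x^2=x(1+x)$ for a degree one class $x$, total Steenrod squares of polynomials in $\alpha,\beta$ are easy to compute. Note also that $1+\beta$ is a unit, so the hypothesis determines $w(\xi)$ uniquely:
\beq
w(\xi)=(1+\alpha)^{m+1}(1+\alpha+\beta)^{m+1}(1+\beta)^{-1}.
\eeq

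First, we find a candidate $V$ with $\Sq(V)=w(\xi)$. A natural ansatz is to look for degree-one classes $x_k$ with
\beq
\prod_k x_k(1+x_k)=\prod\text{(factors of } w(\xi)),
\eeq
much as one does for the Wu class of $\R P^m$. Observe that $\alpha(\alpha+\beta)$ is quadratic and that
\beq
\Sq\!\left(\frac{1}{1+\gamma}\right)=\frac{1}{1+\gamma+\gamma^2}
\eeq
for $|\gamma|=1$. Expressions like $(1+\alpha)(1+\alpha+\beta)=1+\beta+\alpha(\alpha+\beta)$ suggest writing
\beq
w(\xi)=\Bigl(1+\frac{\alpha(\alpha+\beta)}{1+\beta}\Bigr)^{m+1}(1+\beta)^{m}.
\eeq
This suggests trying a candidate of the form $V=f(\alpha,\beta)$ built from $(1+\beta)^{-1}$ and powers of $\alpha$, where the truncation $\alpha^{m+1}=0$ is used.

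Rather than finding the full $V$, it may be more efficient to extract only $v_m$. For this, use the standard inverse formula: $v_m$ is determined by $w_m=\sum_i \Sq^i v_{m-i}$, and $\Sq^i v_{m-i}=0$ for $i>m-i$. A cleaner route is to invert $\Sq$ on the subalgebra generated by $\alpha$ and $\beta$. The map $\Sq$ is an injective ring endomorphism on the completed algebra, with inverse determined by $\alpha\mapsto \alpha/(1+\alpha)$ formally: setting $\alpha=\bar a(1+\bar a)$ gives $\bar a=\alpha+\alpha^2+\cdots$ in characteristic 2 plus corrections. So the plan is as follows.

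(i) Write $\Sq^{-1}$ on polynomials in one-dimensional classes via the substitution $x\mapsto x/(1+x)$ (as a multiplicative map). This works because $\Sq(x/(1+x))=x(1+x)/(1+x+x^2)$, so I will instead use the known inverse $\chi$-type formula and check it directly.

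(ii) Apply it to $w(\xi)$ to get $v(\xi)$.

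(iii) Extract the degree $m$ component and simplify using $\alpha^{m+1}=0$ and Lucas' theorem for binomials mod 2.

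(iv) Verify the answer $\sum_j\binom{m+1}{j}\alpha^j\beta^{m-j}$. As a sanity check, note that this is the $m$-th Stiefel--Whitney class of $(m+1)L_\alpha\otimes L_\beta$-type bundles, mirroring Lemma \ref{lem:euler-normal-fixedpts}. This hints that $v_m$ equals the top class $w_m$ of a bundle $\eta$ of rank $m$ with $w(\eta)=\sum_j\binom{m+1}{j}\alpha^j(1+\beta)^{m-j}$, i.e. $w(\eta)(1+\beta)=(1+\alpha+\beta)^{m+1}$ formally.

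So the concrete strategy is to use the classical fact that for a rank $n$ bundle $\xi$, $v_{\text{top}}$-type Wu classes relate to $\Sq$ of an Euler class. More concretely, we will prove the identity $\Sq(v_m)=$ (degree $\ge m$ part of $w(\xi)$) consistency via the formula
\beq
w_{k}(\xi)=\sum_{i}\Sq^i v_{k-i}(\xi),
\eeq
verified by induction on $k\le m$, with candidate
\beq
v_k=\sum_j\binom{k+1}{j}\alpha^j\beta^{k-j}\quad\text{(or the analogous guess)}.
\eeq

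The main obstacle is guessing the correct lower Wu classes $v_k$ for $k<m$. The induction needs all of them, and the binomial bookkeeping requires Lucas-type congruences (presumably the later Lemma \ref{lem:Lucas}). I would first compute small cases $m=1,2$ by hand to pin down the general pattern for $v_k$, then prove the identity by comparing generating functions in $\alpha,\beta$. Here $\Sq^i(\alpha^a\beta^b)=\sum\binom{a}{p}\binom{b}{i-p}\alpha^{a+p}\beta^{b+i-p}$.
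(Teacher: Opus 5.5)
Your proposal has a genuine gap: it never actually computes $v_m(\xi)$, and both routes you sketch break at the decisive step.

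First, the inverse of $\Sq$ on a degree-one class $x$ is not $x\mapsto x/(1+x)$; as you notice yourself, $\Sq(x/(1+x))\neq x$. The correct inverse is $x\mapsto x+x^2+x^4+x^8+\cdots$, the unique $X$ with $X+X^2=x$, and you never state or use it.

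Second, your fallback is induction on $k$ via $w_k=\sum_i\Sq^iv_{k-i}$ with the candidate $v_k=\sum_j\binom{k+1}{j}\alpha^j\beta^{k-j}$. This fails already at $k=1$. Comparing degree-one parts of the hypothesis gives $w_1(\xi)+\beta=(m+1)\alpha+(m+1)(\alpha+\beta)$, so $v_1(\xi)=w_1(\xi)=m\beta$. This vanishes for even $m$, whereas your candidate gives $\beta$, which is nonzero, e.g.\ for $\beta=\theta=s+t$ in the application. For $m=2$ one finds $v_1=0$ and $v_2=\alpha^2+\alpha\beta+\beta^2$. So the lower Wu classes depend on $m$, and no guess of your shape can drive the induction. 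Lemma~\ref{lem:Lucas} is also irrelevant here; it concerns the parity of $\binom{(m+1)(2r+1)}{m-1}$ as $r$ varies.

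What is missing is the mechanism that makes the degree-$m$ part collapse. Put $U=\alpha+\alpha^2+\alpha^4+\cdots$ and $\Theta=\beta+\beta^2+\beta^4+\cdots$. Multiplicativity of $\Sq^{-1}$ then gives $v(\xi)=(1+U)^{m+1}(1+U+\Theta)^{m+1}(1+\Theta)^{-1}$. The relation $U+U^2=\alpha$ yields $(1+U)(1+U+\Theta)=\alpha+(1+U)(1+\Theta)$. A binomial expansion in $\alpha$, truncated by $\alpha^{m+1}=0$, therefore gives
\[
v(\xi)=\sum_{j=0}^m\binom{m+1}{j}\alpha^j(1+U)^{m-j+1}(1+\Theta)^{m-j}.
\]
The remaining key step is that the degree-$q$ part of $(1+U)^{q+1}(1+\Theta)^q$ is exactly $\beta^q$. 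This is proved by induction on $q$. For $q=2k$, write $1+U=(1+U^2)+\alpha$; the $\alpha$-term contributes only odd degrees, and $(1+U^2)^{k+1}(1+\Theta^2)^k$ is the square of $(1+U)^{k+1}(1+\Theta)^k$, so its degree-$2k$ part is the square of the degree-$k$ part. For $q=2k+1$, write the product as $(1+\Theta)$ times that same square. Taking $q=m-j$ gives the lemma. Without this identity, ``apply $\Sq^{-1}$ and extract degree $m$'' only restates the goal.
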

\begin{proof}
By the Wu formula, $w(\xi)=\operatorname{Sq}(v(\xi))$, which can be formally written as
\beq
v(\xi)=\operatorname{Sq}^{-1}(w(\xi)).
\eeq
For a degree $1$ class $x$, we have $\operatorname{Sq}(x)=x+x^2$, so the inverse total Steenrod square replaces $x$ by $x+x^2+x^4+x^8+\cdots$. Set $U=\alpha+\alpha^2+\alpha^4+\alpha^8+\cdots$ and $\Theta=\beta+\beta^2+\beta^4+\beta^8+\cdots$. Then
\beq
v(\xi)=(1+U)^{m+1}(1+U+\Theta)^{m+1}(1+\Theta)^{-1}.
\eeq
Since $U+U^2=\alpha$, we have $(1+U)(1+U+\Theta)=\alpha+(1+U)(1+\Theta)$. Consequently, using $\alpha^{m+1}=0$, we obtain
\beq
v(\xi)=\sum_{j=0}^m\binom{m+1}{j}\alpha^j(1+U)^{m-j+1}(1+\Theta)^{m-j}.
\eeq
We claim that, for every $q\geq 0$, the degree $q$ part of $(1+U)^{q+1}(1+\Theta)^q$ is $\beta^q$. We prove this by induction on $q$. The claim is clear for $q=0$. Suppose first that $q=2k$, $k\geq 1$. Since $1+U=(1+U^2)+\alpha$, we have
\beq
(1+U)^{2k+1}(1+\Theta)^{2k}=(1+U^2)^{k+1}(1+\Theta^2)^k+\alpha(1+U^2)^k(1+\Theta^2)^k.
\eeq
Note that the summand $\alpha(1+U^2)^k(1+\Theta^2)^k$ has only terms of odd degree. Hence, the degree $2k$ part of $(1+U)^{2k+1}(1+\Theta)^{2k}$ is the degree $2k$ part of $(1+U^2)^{k+1}(1+\Theta^2)^k$.
On the other hand
\beq
(1+U^2)^{k+1}(1+\Theta^2)^k=\left((1+U)^{k+1}(1+\Theta)^k\right)^2.
\eeq
Therefore the degree $2k$ part is the square of the degree $k$ part of $(1+U)^{k+1}(1+\Theta)^k$. By the induction hypothesis, this is $(\beta^k)^2=\beta^{2k}$.

The case $q=2k+1$ is similar: $(1+U)^{2k+2}(1+\Theta)^{2k+1}=(1+\Theta)\left((1+U)^{k+1}(1+\Theta)^k\right)^2$. Since the square has only terms of even degree and $\beta$ is the only odd-degree term of $1+\Theta$, the induction hypothesis implies that the degree $2k+1$ part is $\beta(\beta^k)^2=\beta^{2k+1}$. This proves the claim.

Taking now $q=m-j$, we conclude that the degree $m-j$ part of $(1+U)^{m-j+1}(1+\Theta)^{m-j}$ is $\beta^{m-j}$. Comparing degree $m$ in the formula for $v(\xi)$ therefore gives
\beq
v_m(\xi)=\sum_{j=0}^m\binom{m+1}{j}\alpha^j\beta^{m-j}.
\eeq
\end{proof}
\begin{lemma}\label{lem:vertical-tangent-fixedpts}
Let $i:EC_2\times_{C_2}P_m^{C_2}=BC_2\times P_m^{C_2}\hookrightarrow EC_2\times_{C_2}P_m$ be the inclusion. Then there is an isomorphism
\beq
i^*\msf{T}_v(p_D)\cong\operatorname{pr}_{P_m^{C_2}}^*TP_m^{C_2}\oplus\nu^{C_2}(P_m^{C_2}).
\eeq
Moreover, $i^*v_m(\msf{T}_v(p_D))=e$ and $i^*\Sq^1v_m(\msf{T}_v(p_D))=
\begin{cases}
ue\ \ \text{if}\ \ m\ \ \text{is even}\\
\theta e\ \ \text{if}\ \ m\ \ \text{is odd}
\end{cases}$.
\end{lemma}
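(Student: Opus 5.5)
The plan is to restrict the vertical tangent bundle to the fixed locus using the equivariant tubular neighborhood. Then I compute its total Stiefel--Whitney class, feed it into Lemma~\ref{lem:Wu-class}, and finally compute $\Sq^1$ by hand.

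\emph{Splitting.} Restricted to $P_m^{C_2}$, the tangent bundle of $P_m$ splits $C_2$-equivariantly as $TP_m|_{P_m^{C_2}}\cong TP_m^{C_2}\oplus\nu(P_m^{C_2})$. This follows from an invariant metric, or equivalently the equivariant tubular neighborhood theorem. The group $C_2$ acts trivially on $TP_m^{C_2}$, since the differential of an isometry is the identity on the tangent space of its fixed set. Applying the Borel construction $EC_2\times_{C_2}(-)$, the restriction $i^*\msf{T}_v(p_D)$ is the Borel construction of $TP_m|_{P_m^{C_2}}$. This gives $\operatorname{pr}^*TP_m^{C_2}\oplus\nu^{C_2}(P_m^{C_2})$, which is the first claim.

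\emph{Total Stiefel--Whitney class.} We have $T(\R P^m\times S^1)\oplus\epsilon\cong(m+1)\gamma\oplus\epsilon$, where $\gamma$ is the tautological line bundle over $\R P^m$. Hence the first summand has $w=(1+u)^{m+1}$. For the normal part, Lemma~\ref{lem:C2-normal-fixedpts} identifies $\nu^{C_2}$ with $T\R P^m\otimes L$, where $L$ is a line bundle with $w_1(L)=\theta$. From $T\R P^m\oplus\epsilon\cong(m+1)\gamma$ we get $(T\R P^m\otimes L)\oplus L\cong(m+1)(\gamma\otimes L)$, and therefore
\[
w\bigl(i^*\msf{T}_v(p_D)\bigr)\,(1+\theta)=(1+u)^{m+1}(1+u+\theta)^{m+1}.
\]
Since $u^{m+1}=0$, the hypotheses of Lemma~\ref{lem:Wu-class} hold with $\alpha=u$ and $\beta=\theta$. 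Naturality of Wu classes then gives
\[
i^*v_m(\msf{T}_v(p_D))=\sum_{j}\binom{m+1}{j}u^j\theta^{m-j},
\]
which is exactly $e$ by Lemma~\ref{lem:euler-normal-fixedpts}.

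\emph{The $\Sq^1$ computation.} I expect this to be the only delicate step, though it is mechanical. Since $\Sq^1$ commutes with $i^*$, it suffices to compute $\Sq^1 e$. For the degree-one classes $u$ and $\theta$ we have $\Sq^1u=u^2$ and $\Sq^1\theta=\theta^2$, so the Cartan formula gives
\[
\Sq^1(u^j\theta^{m-j})=\bigl(ju+(m-j)\theta\bigr)u^j\theta^{m-j}.
\]
\begin{itemize}
\item For $m$ even, I will show that $\Sq^1e+ue=\sum_j\binom{m+1}{j}\bigl((j+1)u+j\theta\bigr)u^j\theta^{m-j}$ vanishes. Collecting the coefficient of $u^{j+1}\theta^{m-j}$ gives $(j+1)\bigl[\binom{m+1}{j}+\binom{m+1}{j+1}\bigr]=(j+1)\binom{m+2}{j+1}=(m+2)\binom{m+1}{j}\equiv0$. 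The top term is killed by $u^{m+1}=0$, and the $j=0$ term carries coefficient $0\cdot\theta=0$.
\item For $m$ odd, the same bookkeeping applies with $\Sq^1e+\theta e=\sum_j\binom{m+1}{j}\bigl(ju+(j+1)\theta\bigr)u^j\theta^{m-j}$, since now $m-j\equiv j+1$. Collecting the coefficient of $u^{j}\theta^{m-j+1}$ gives $\binom{m+1}{j}(j+1)+\binom{m+1}{j-1}(j-1)$. I will check that this is congruent to $0$ mod $2$ using Pascal's rule and the parity of $m+1$, together with $\binom{m+1}{m+1}$-type boundary terms and $u^{m+1}=0$.
\end{itemize}
Throughout I keep in mind that $\theta^2=t^2$ since $s^2=0$. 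This relation is harmless, because the computation never needs to reduce powers of $\theta$.
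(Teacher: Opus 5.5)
Everything up to and including $i^*v_m(\msf{T}_v(p_D))=e$ is correct and matches the paper. Your direct Cartan-formula check for $m$ even is also valid: the telescoped coefficient $(j+1)\binom{m+2}{j+1}=(m+2)\binom{m+1}{j}$ is indeed even.

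The odd case, however, fails as written. You correctly note that $m-j\equiv j+1$ for odd $m$. But then $\sum_j\binom{m+1}{j}\bigl(ju+(j+1)\theta\bigr)u^j\theta^{m-j}$ is $\Sq^1e$ itself, not $\Sq^1e+\theta e$. You dropped the contribution of $\theta e$, which would turn the $\theta$-coefficient $j+1$ into $j$. The coefficient you then plan to show is even, $\binom{m+1}{j}(j+1)+\binom{m+1}{j-1}(j-1)$, is congruent to $\binom{m+1}{j}$ mod $2$, because $j\binom{m+1}{j}=(m+1)\binom{m}{j-1}$ is even for odd $m$. In particular it equals $1$ for $j=0$. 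Concretely, for $m=1$ you have $e=\theta$, and your sum equals $\theta^2=t^2\neq 0$. So the planned verification amounts to proving $\Sq^1e=0$, which is false, and no use of Pascal's rule or boundary terms can rescue it.

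The repair is short. The correct expression is $\Sq^1e+\theta e=\sum_j j\binom{m+1}{j}(u+\theta)u^j\theta^{m-j}$. Every coefficient $j\binom{m+1}{j}=(m+1)\binom{m}{j-1}$ vanishes mod $2$ when $m$ is odd, so the identity holds term by term.

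The paper avoids this bookkeeping and handles both parities at once. By Lemma \ref{lem:euler-normal-fixedpts}, $e$ is the top class $w_m(\nu^{C_2}(P_m^{C_2}))$ of a rank-$m$ bundle. Wu's formula (or the splitting principle) then gives $\Sq^1w_m=w_1w_m$. The tensor-product description gives $w_1(\nu^{C_2}(P_m^{C_2}))=(m+1)u+m\theta$, which is $u$ for $m$ even and $\theta$ for $m$ odd.

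Only the even case is used later in the paper. So your argument suffices for the main theorems, but not for the lemma as stated until the odd case is fixed.
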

\begin{proof}
Choose a $C_2$-invariant Riemannian metric on $P_m$. Along the fixed point set, it gives a $C_2$-equivariant splitting $TP_m|_{P_m^{C_2}}\cong TP_m^{C_2}\oplus\nu(P_m^{C_2})$. Since $C_2$ acts trivially on $P_m^{C_2}$, applying the Borel construction gives
\beq
i^*\msf{T}_v(p_D)\cong\operatorname{pr}_{P_m^{C_2}}^*TP_m^{C_2}\oplus\nu^{C_2}(P_m^{C_2}).
\eeq
The Whitney product formula gives
\beq
w\bigl(i^*\msf{T}_v(p_D)\bigr)=w(TP_m^{C_2})w(\nu^{C_2}(P_m^{C_2})).
\eeq
Since $P_m^{C_2}=\R P^m\times S^1$, we get $w(TP_m^{C_2})=(1+u)^{m+1}$. To compute the second factor, we add the line bundle $\operatorname{pr}_{S^1}^*\mu\otimes\operatorname{pr}_{BC_2}^*\lambda$ to both sides of the isomorphism of Lemma \ref{lem:C2-normal-fixedpts} to obtain
\beq
\nu^{C_2}(P_m^{C_2})\oplus (\operatorname{pr}_{S^1}^*\mu\otimes\operatorname{pr}_{BC_2}^*\lambda)\cong (T\R P^m\oplus\epsilon^1)\otimes\operatorname{pr}_{S^1}^*\mu\otimes\operatorname{pr}_{BC_2}^*\lambda.
\eeq
Using the fact that $T\R P^m\oplus\epsilon^1$ splits as a sum of $m+1$ copies of the canonical line bundle, whose first Stiefel--Whitney class is $u$, the Whitney formula gives
\beq
w(\nu^{C_2}(P_m^{C_2}))(1+\theta)=(1+u+\theta)^{m+1}.
\eeq
Therefore
\beq
w\bigl(i^*\msf{T}_v(p_D)\bigr)(1+\theta)=(1+u)^{m+1}(1+u+\theta)^{m+1}.
\eeq
Applying Lemma \ref{lem:Wu-class} we obtain
\beq
i^*v_m(\msf{T}_v(p_D))=\sum_{j=0}^m\binom{m+1}{j}u^j\theta^{m-j}=e.
\eeq
Finally, naturality and the Wu formula give $i^*\Sq^1v_m=\Sq^1e=w_1(\nu^{C_2}(P_m^{C_2}))e$.
But $w_1(\nu^{C_2}(P_m^{C_2}))=(m+1)u+m\theta=u$, hence $i^*\Sq^1v_m=ue$ if $m$ is even and $i^*\Sq^1v_m=\theta e$ if $m$ is odd.
\end{proof}
For every $r\geq0$, set
\beq
z_r:=v_m(\msf{T}_v(p_D))^{2r+1}\Sq^1v_m(\msf{T}_v(p_D))\in H^{2m(r+1)+1}_{C_2}(P_m).
\eeq
\begin{lemma}\label{lem:Dold-tautological-class}
For every positive even integer $m>1$ and every $r\geq 0$, the following equality holds
\beq
(p_D)_!(z_r)=\binom{(m+1)(2r+1)}{m-1}t^{2mr}\in H^{2mr}(BC_2;\F_2).
\eeq
\end{lemma}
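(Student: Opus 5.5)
The plan is to evaluate $(p_D)_!(z_r)$ with the localization formula \eqref{eq:loc-formula}. The fixed point set $P_m^{C_2}=\R P^m\times S^1$ is connected, so the sum has a single term. Since $H^*(BC_2;\F_2)=\F_2[t]$ injects into $\F_2[t,t^{-1}]$, it suffices to compute the right-hand side after inverting $t$.

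\textbf{Reducing to a coefficient.} By Lemma \ref{lem:vertical-tangent-fixedpts}, with $m$ even, we have $i^*z_r=e^{2r+1}\cdot ue=ue^{2r+2}$. Dividing by the equivariant Euler class $e$ of Lemma \ref{lem:euler-normal-fixedpts} gives
\beq
(p_D)_!(z_r)=\pi_!\bigl(ue^{2r+1}\bigr).
\eeq
The fiber integration $\pi_!$ along $\R P^m\times S^1$ extracts the coefficient of $u^ms$. So the task is to find the coefficient of $u^{m-1}s$ in $e^{2r+1}$, expressed as a power of $t$.

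\textbf{Closed form for $e$.} Put $\theta=s+t$, which is a unit after inverting $t$: indeed $\theta^{-1}=t^{-1}+st^{-2}$, because $s^2=0$. Since $u^{m+1}=0$, expanding $(u+\theta)^{m+1}$ kills the $j=m+1$ term, so
\beq
\theta e=\sum_{j=0}^m\binom{m+1}{j}u^j\theta^{m+1-j}=(u+\theta)^{m+1}.
\eeq
Hence $e^{2r+1}=(u+\theta)^{N}\theta^{-(2r+1)}$ with $N=(m+1)(2r+1)$. The $u^{m-1}$ part of this is $\binom{N}{m-1}u^{m-1}\theta^{N-m+1-2r-1}$. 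The exponent simplifies to $2mr+1$.

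\textbf{Extracting the $s$-coefficient.} Since $s^2=0$, we have $\theta^{2mr+1}=t^{2mr+1}+(2mr+1)st^{2mr}=t^{2mr+1}+st^{2mr}$. Therefore the coefficient of $u^ms$ in $ue^{2r+1}$ is $\binom{N}{m-1}t^{2mr}$, which is the claimed formula.

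\textbf{Expected difficulty.} The only delicate point is justifying the manipulations in the localized ring. Negative powers of $\theta$ are legitimate because $\theta$ is a unit there, and nilpotence of $u$ and $s$ is used throughout. One must also note that the answer is a polynomial in $t$, so it lies in $H^{2mr}(BC_2;\F_2)$. This is automatic from the injectivity remarked above. Beyond that, the main step is spotting the identity $\theta e=(u+\theta)^{m+1}$, after which the computation is routine binomial bookkeeping.
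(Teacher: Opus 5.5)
Your proposal is correct and follows the paper's proof essentially step by step. Both arguments apply the single-component localization formula to get $i^*z_r/e=ue^{2r+1}$, use the identity $\theta e=(u+\theta)^{m+1}$, and then extract the $u^ms$ coefficient via $\theta^{2mr+1}=t^{2mr+1}+st^{2mr}$ and injectivity of $\F_2[t]\to\F_2[t,t^{-1}]$.
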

\begin{proof}
Let $q:P_m^{C_2}\times BC_2\to BC_2$ be the projection. Lemma \ref{lem:vertical-tangent-fixedpts} and the localization formula \eqref{eq:loc-formula} give
\beq
(p_D)_!(z_r)=q_!\left(\frac{i^*z_r}{e}\right)=q_!\left(ue^{2r+1}\right)\in\F_2[t,t^{-1}].
\eeq
Since $u^{m+1}=0$, Lemmas \ref{lem:Wu-class} and \ref{lem:vertical-tangent-fixedpts} give the identity $\theta e=(u+\theta)^{m+1}$. After inverting $t$, the class $\theta=s+t$ is a unit, and hence $ue^{2r+1}=u\theta^{-(2r+1)}(u+\theta)^{(m+1)(2r+1)}$.
The coefficient of $u^m$ in this expression is
\beq
\binom{(m+1)(2r+1)}{m-1}\theta^{2mr+1}.
\eeq
Since $s^2=0$, one has $\theta^{2mr+1}=t^{2mr+1}+st^{2mr}$. Integration along the fiber $P_m^{C_2}=\R P^m\times S^1$ of the trivial bundle $q$ extracts the coefficient of $u^ms$, and therefore
\beq
q_!(ue^{2r+1})=\binom{(m+1)(2r+1)}{m-1}t^{2mr}.
\eeq
Since $\F_2[t]\to\F_2[t,t^{-1}]$ is injective, the same equality holds in $H^*(BC_2;\F_2)$.
\end{proof}
\begin{lemma}\label{lem:Lucas}
For every positive even integer $m$, the set
\beq
\msf{Q}_m:=\left\{r\geq 0:\binom{(m+1)(2r+1)}{m-1}=1\in\F_2\right\}
\eeq
is infinite.
\end{lemma}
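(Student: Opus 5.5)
The plan is to use Lucas' theorem mod $2$. It says that $\binom{N}{k}\equiv 1 \pmod 2$ exactly when the binary digits of $k$ form a subset of the binary digits of $N$. We apply this with $k=m-1$ and $N=(m+1)(2r+1)$. It then suffices to produce infinitely many $r$ for which the lowest binary digits of $N$ coincide with those of $m-1$.

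\emph{Choice of $L$.} Pick $L\geq 1$ with $2^L>m-1$. Then every binary digit of $m-1$ sits in one of the positions $0,\dots,L-1$.

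\emph{Reduction to a congruence.} Suppose $N\equiv m-1 \pmod{2^L}$. Then the last $L$ binary digits of $N$ are exactly the binary digits of $m-1$. Hence the digits of $m-1$ form a subset of the digits of $N$. This also forces $N\geq m-1$, since $N>0$ and $0\leq m-1<2^L$. By Lucas' theorem, $\binom{N}{m-1}$ is odd. So it suffices to show that
\beq
(m+1)(2r+1)\equiv m-1 \pmod{2^L}
\eeq
holds for infinitely many $r\geq 0$.

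\emph{Solving the congruence.} Since $m$ is even, $m+1$ is odd and therefore invertible modulo $2^L$. The congruence is thus equivalent to
\beq
2r+1\equiv (m-1)(m+1)^{-1} \pmod{2^L}.
\eeq
Let $a\in\{0,\dots,2^L-1\}$ be the residue of the right-hand side. Because $m-1$ is odd (again using that $m$ is even) and $(m+1)^{-1}$ is odd, $a$ is odd. Write $a=2r_0+1$ with $r_0\geq 0$. Then every $r\equiv r_0 \pmod{2^{L-1}}$ satisfies $2r+1\equiv a \pmod{2^L}$. This gives infinitely many $r\in\msf{Q}_m$.

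\emph{Main obstacle.} The only delicate point is the parity bookkeeping: we need the target residue $a$ to be odd so that it can be written as $2r+1$. This is exactly where the hypothesis that $m$ is even enters, once to make $m+1$ invertible modulo $2^L$ and once to make $m-1$ odd. Everything else is a direct application of Lucas' theorem.
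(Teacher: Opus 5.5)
Your proof is correct, but it takes a different route from the paper's. Both arguments rest on Lucas' theorem and on $m+1$ being odd; they differ in which modulus is used.

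The paper inverts $2$ modulo $m+1$. By Euler's theorem, $m+1$ divides $2^{k\varphi(m+1)}-1$, so one can choose $r_k$ with $(m+1)(2r_k+1)=2^{k\varphi(m+1)}-1$. Then $N$ is a Mersenne number whose binary expansion is all $1$'s, and Lucas' theorem makes $\binom{N}{j}$ odd for every $j\le N$ at once. That argument never looks at the binary digits of $m-1$, or at its parity.

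You instead invert $m+1$ modulo $2^L$ and force the low-order $L$ bits of $N$ to coincide with those of $m-1$. This uses the evenness of $m$ a second time, to make the target residue odd. In return you get a stronger conclusion: $\msf{Q}_m$ contains an entire residue class $r\equiv r_0 \pmod{2^{L-1}}$, so it has positive density, whereas the paper's $r_k$ grow exponentially.

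If you want to avoid tracking the digits of $m-1$, you could target the residue $2^L-1$ instead, so that the low bits of $N$ are all $1$'s. This is the $2$-adic analogue of the paper's Mersenne-number trick, and it works for any lower index below $2^L$.
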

\begin{proof}
Note that since $m+1$ is odd, Euler's theorem gives $2^{\varphi(m+1)}\equiv 1\pmod{m+1}$. For every $k\geq 1$, let $s_k=k\varphi(m+1)$. Then $m+1$ divides $2^{s_k}-1$ and the quotient is an odd number, which we write as $2r_k+1$. Therefore $(m+1)(2r_k+1)=2^{s_k}-1$. Note that the binary expansion of $2^{s_k}-1$ consists of consecutive $1$'s. So if $k\geq 1$, then Lucas' theorem gives $\binom{(m+1)(2r_k+1)}{m-1}=\binom{2^{s_k}-1}{m-1}=1\in\F_2$. Thus $r_k\in\msf{Q}_m$ for every $k\geq 1$. Since the sequence $s_k$ is strictly increasing, so is the sequence $r_k$, and therefore $\msf{Q}_m$ is infinite.
\end{proof}
\section{Proofs of the main theorems}
We can now complete the proof of Theorems \ref{thm:tautological} and \ref{thm:bordism}.
\subsection{Proof of Theorem \ref{thm:tautological}}
Let $M$ be the mapping torus of the Davis--Januszkiewicz hyperbolization of complex conjugation on $\C P^m$. For even $m$, complex conjugation is an orientation preserving involution with fixed points. Therefore, by the discussion in Section \ref{sec:construction} and Lemma \ref{lem:aspherical-center}, $M$ is a closed oriented aspherical manifold of dimension $2m+1$ whose fundamental group has nontrivial center, and is equipped with a (homotopically trivial) action of $C_2$, inducing the map $\beta:BC_2\to B\Diff_h(M)$. By Corollary \ref{cor:beta}, we have that $\beta^*(\kappa_{\mathfrak{c}})=(p_D)_!(\mathfrak{c}(\msf{T}_v(p_D)))$ for any characteristic class $\mathfrak{c}\in H^*(B\On;\F_2)$, where 
$p_D:EC_2\times_{C_2}P_m\to BC_2$ is the fiber bundle projection and, as above, $P_m$ is the Dold manifold (the mapping torus of complex conjugation).

Now by Lemma \ref{lem:Dold-tautological-class} and Lemma \ref{lem:Lucas}, the set $\msf{Q}_m$ is infinite and, for every $r\in\msf{Q}_m$, we have
\beq
\beta^*\kappa_{v_m^{2r+1}\Sq^1v_m}=t^{2mr}\neq 0\in H^*(BC_2;\F_2)=\F_2[t].
\eeq
It then follows that
\beq
\ka_{v_m^{2r+1}\Sq^1v_m}\neq 0\in H^{2mr}(B\Diff_h(M);\F_2)
\eeq
for every $r\in\msf{Q}_m$. Furthermore for every $j\geq 1$,
\beq
\beta^*\left(\kappa_{v_m^{2r+1}\Sq^1v_m}^j\right)=t^{2mrj}\neq 0.
\eeq
Hence the class $\kappa_{v_m^{2r+1}\Sq^1v_m}$ is not nilpotent. \qed
\subsection{Proof of Theorem \ref{thm:bordism}}
We specialize the preceding construction to $m=2$. Since $M$ is oriented, we have $v_2(M)=w_2(M)$ and $\Sq^1v_2(M)=w_3(M)$. Taking $r=0$ in Lemma \ref{lem:Dold-tautological-class} gives
\beq
\kappa_{w_2w_3}=\kappa_{v_2\Sq^1v_2}=\binom{3}{1}=1\in H^0(B\Diff_h(M);\F_2).
\eeq
Since the degree-zero tautological class is the corresponding Stiefel--Whitney number of $M$, we obtain
\beq
\left\langle w_2(M)w_3(M),[M]\right\rangle=1.
\eeq
In particular, $M$ is not nullbordant and, by Lemma \ref{lem:aspherical-center}, its fundamental group has nontrivial center.

Let now $\mathfrak{N}_*$ denote the unoriented smooth bordism ring. Fix a fake projective plane $F$ as in \cite{mumford}. The manifold $F$ is a quotient of the complex $2$-ball by a torsion-free cocompact discrete subgroup of isometries, and hence is a closed, oriented, smooth, aspherical $4$-manifold. It has the same Betti numbers as $\C P^2$, so $\chi(F)=3$, and hence $\langle w_4(F),[F]\rangle=1$. In particular, $[F]\neq0$ in $\mathfrak{N}_4$. Now for $a\geq1$ and $b\geq0$, set
\beq
M_{a,b}:=\underbrace{M\times\cdots\times M}_{a\ \text{times}}\times\underbrace{F\times\cdots\times F}_{b\ \text{times}}.
\eeq
This is a closed oriented aspherical manifold of dimension $5a+4b$. The elements $t^2$ coming from the $a$ factors of $M^a$ generate a subgroup isomorphic to $\Z^a$ in $\mathrm Z(\pi_1(M_{a,b}))$. The calculation above shows that $[M]\neq0$ in $\mathfrak{N}_*$. Hence $[M_{a,b}]=[M]^a[F]^b\neq0$, because $\mathfrak{N}_*$ has no zero divisors, by Thom's calculations. Thus $M_{a,b}$ is not nullbordant. \qed
\begin{remark}
For $m\equiv 2\pmod4$, Lemma \ref{lem:Dold-tautological-class} with $r=0$ gives
\beq
\left\langle v_m(M)\Sq^1v_m(M),[M]\right\rangle=1.
\eeq
By \cite[Theorem]{lmp}, this characteristic number is equal to the Stiefel--Whitney number
\beq
\left\langle w_2(M)w_{2m-1}(M),[M]\right\rangle.
\eeq
These are more examples of non-nullbordant aspherical manifolds with nontrivial center. 
\end{remark}
\begin{remark}
We briefly explain why the vanishing argument of Hebestreit--Land--L\"uck--Randal-Williams does not extend to $\F_2$-coefficients in our examples. Let $\Gamma=\pi_1(M)$ and let $t\in\Gamma$ be the stable letter. Since the monodromy has order $2$, the element $g=t^2$ is central and has infinite order. The central part of Burghelea's conjecture used in \cite{HLLRW} asserts, with rational coefficients, that $\Gamma/\langle g\rangle$ has finite cohomological dimension with trivial coefficients. This condition is used in \cite[Proposition 6.1.13]{HLLRW} to show that the Euler class of the central extension
\beq
1\to\langle g\rangle\to\Gamma\to\Gamma/\langle g\rangle\to1
\eeq
is nilpotent.

The corresponding statement is false with $\F_2$-coefficients. Indeed, $\Gamma/\langle t^2\rangle\cong\pi_1(W)\rtimes C_2$ contains the subgroup generated by the image of $t$. Restricting the preceding central extension to this subgroup gives
\beq
1\to\langle t^2\rangle\to\langle t\rangle\to C_2\to 1.
\eeq
If $e\in H^2(\Gamma/\langle t^2\rangle;\F_2)$ denotes its mod $2$ Euler class and $x\in H^1(BC_2;\F_2)$ is a generator, then the restriction of $e$ to $BC_2$ is $x^2$. Since $H^*(BC_2;\F_2)=\F_2[x]$, the class $x^2$ is not nilpotent, and hence neither is $e$. In particular, $H^*(\Gamma/\langle t^2\rangle;\F_2)$ is nonzero in arbitrarily high degrees, so the central part of Burghelea's conjecture does not hold for $\Gamma$ with $\F_2$-coefficients. This is one step where the rational vanishing argument of \cite{HLLRW} cannot be applied.
\end{remark}
    \bibliographystyle{amsalpha}
\bibliography{refs}

@incollection {gromov,
    AUTHOR = {Gromov, M.},
     TITLE = {Hyperbolic groups},
 BOOKTITLE = {Essays in group theory},
    SERIES = {Math. Sci. Res. Inst. Publ.},
    VOLUME = {8},
     PAGES = {75--263},
 PUBLISHER = {Springer, New York},
      YEAR = {1987},
      ISBN = {0-387-96618-8},
   MRCLASS = {20F32 (20F06 20F10 22E40 53C20 57R75 58F17)},
  MRNUMBER = {919829},
MRREVIEWER = {Christopher\ W.\ Stark},
       DOI = {10.1007/978-1-4613-9586-7\_3},
       URL = {https://doi-org.pucdechile.idm.oclc.org/10.1007/978-1-4613-9586-7_3},
}

@book {allday-puppe,
    AUTHOR = {Allday, C. and Puppe, V.},
     TITLE = {Cohomological methods in transformation groups},
    SERIES = {Cambridge Studies in Advanced Mathematics},
    VOLUME = {32},
 PUBLISHER = {Cambridge University Press, Cambridge},
      YEAR = {1993},
     PAGES = {xii+470},
      ISBN = {0-521-35022-0},
   MRCLASS = {55N91 (55-02 57-02 57S10)},
  MRNUMBER = {1236839},
MRREVIEWER = {Allan\ Edmonds},
       DOI = {10.1017/CBO9780511526275},
       URL = {https://doi-org.pucdechile.idm.oclc.org/10.1017/CBO9780511526275},
}

@article {lmp,
    AUTHOR = {Lusztig, G. and Milnor, J. and Peterson, F. P.},
     TITLE = {Semi-characteristics and cobordism},
   JOURNAL = {Topology},
  FJOURNAL = {Topology. An International Journal of Mathematics},
    VOLUME = {8},
      YEAR = {1969},
     PAGES = {357--359},
      ISSN = {0040-9383},
   MRCLASS = {57.10},
  MRNUMBER = {246308},
MRREVIEWER = {H.\ Suzuki},
       DOI = {10.1016/0040-9383(69)90021-4},
       URL = {https://doi.org/10.1016/0040-9383(69)90021-4},
}

@article {cwy,
    AUTHOR = {Cappell, S. and Weinberger, S. and Yan, M.},
     TITLE = {Closed aspherical manifolds with center},
   JOURNAL = {J. Topol.},
  FJOURNAL = {Journal of Topology},
    VOLUME = {6},
      YEAR = {2013},
    NUMBER = {4},
     PAGES = {1009--1018},
      ISSN = {1753-8416,1753-8424},
   MRCLASS = {57R67 (57S25)},
  MRNUMBER = {3145148},
MRREVIEWER = {Laurence\ R.\ Taylor},
       DOI = {10.1112/jtopol/jtt023},
       URL = {https://doi.org/10.1112/jtopol/jtt023},
}

@article {gottlieb,
    AUTHOR = {Gottlieb, D. H.},
     TITLE = {A certain subgroup of the fundamental group},
   JOURNAL = {Amer. J. Math.},
  FJOURNAL = {American Journal of Mathematics},
    VOLUME = {87},
      YEAR = {1965},
     PAGES = {840--856},
      ISSN = {0002-9327,1080-6377},
   MRCLASS = {55.40},
  MRNUMBER = {189027},
MRREVIEWER = {George\ McCarty},
       DOI = {10.2307/2373248},
       URL = {https://doi.org/10.2307/2373248},
}

@article {farrell,
    AUTHOR = {Farrell, F. T.},
     TITLE = {The signature and arithmetic genus of certain aspherical
              manifolds},
   JOURNAL = {Proc. Amer. Math. Soc.},
  FJOURNAL = {Proceedings of the American Mathematical Society},
    VOLUME = {57},
      YEAR = {1976},
    NUMBER = {1},
     PAGES = {165--168},
      ISSN = {0002-9939,1088-6826},
   MRCLASS = {57D20 (58G10)},
  MRNUMBER = {407855},
MRREVIEWER = {K.\ H.\ Mayer},
       DOI = {10.2307/2040888},
       URL = {https://doi.org/10.2307/2040888},
}

@article {byun,
    AUTHOR = {Byun, Y.},
     TITLE = {On vanishing of characteristic numbers in {P}oincar\'e{}
              complexes},
   JOURNAL = {Trans. Amer. Math. Soc.},
  FJOURNAL = {Transactions of the American Mathematical Society},
    VOLUME = {348},
      YEAR = {1996},
    NUMBER = {8},
     PAGES = {3085--3095},
      ISSN = {0002-9947,1088-6850},
   MRCLASS = {57R20 (57P10)},
  MRNUMBER = {1322949},
MRREVIEWER = {John\ Bryant},
       DOI = {10.1090/S0002-9947-96-01495-X},
       URL = {https://doi-org.pucdechile.idm.oclc.org/10.1090/S0002-9947-96-01495-X},
}

@article {dj,
    AUTHOR = {Davis, M. W. and Januszkiewicz, T.},
     TITLE = {Hyperbolization of polyhedra},
   JOURNAL = {J. Differential Geom.},
  FJOURNAL = {Journal of Differential Geometry},
    VOLUME = {34},
      YEAR = {1991},
    NUMBER = {2},
     PAGES = {347--388},
      ISSN = {0022-040X},
       DOI = {10.4310/jdg/1214447212},
       URL = {https://doi.org/10.4310/jdg/1214447212},
}

@article {dold,
    AUTHOR = {Dold, A.},
     TITLE = {Erzeugende der {T}homschen {A}lgebra {$\mathfrak{N}$}},
   JOURNAL = {Math. Z.},
  FJOURNAL = {Mathematische Zeitschrift},
    VOLUME = {65},
      YEAR = {1956},
    NUMBER = {1},
     PAGES = {25--35},
      ISSN = {0025-5874},
       DOI = {10.1007/BF01473868},
       URL = {https://doi.org/10.1007/BF01473868},
}

@article {HLLRW,
    AUTHOR = {Hebestreit, F. and Land, M. and L{\"u}ck, W.
              and Randal-Williams, O.},
     TITLE = {A vanishing theorem for tautological classes of aspherical manifolds},
   JOURNAL = {Geom. Topol.},
  FJOURNAL = {Geometry \& Topology},
    VOLUME = {25},
      YEAR = {2021},
    NUMBER = {1},
     PAGES = {47--110},
      ISSN = {1465-3060},
       DOI = {10.2140/gt.2021.25.47},
       URL = {https://doi.org/10.2140/gt.2021.25.47},
}

@article {illman,
    AUTHOR = {Illman, S.},
     TITLE = {Smooth equivariant triangulations of {$G$}-manifolds for
              {$G$} a finite group},
   JOURNAL = {Math. Ann.},
  FJOURNAL = {Mathematische Annalen},
    VOLUME = {233},
      YEAR = {1978},
    NUMBER = {3},
     PAGES = {199--220},
      ISSN = {0025-5831},
       DOI = {10.1007/BF01405351},
       URL = {https://doi.org/10.1007/BF01405351},
}

@article {mumford,
    AUTHOR = {Mumford, D.},
     TITLE = {An algebraic surface with {$K$}\ ample, {$(K\sp{2})=9$},
              {$p\sb{g}=q=0$}},
   JOURNAL = {Amer. J. Math.},
  FJOURNAL = {American Journal of Mathematics},
    VOLUME = {101},
      YEAR = {1979},
    NUMBER = {1},
     PAGES = {233--244},
      ISSN = {0002-9327,1080-6377},
   MRCLASS = {14J25},
  MRNUMBER = {527834},
MRREVIEWER = {Miles\ Reid},
       DOI = {10.2307/2373947},
       URL = {https://doi-org.pucdechile.idm.oclc.org/10.2307/2373947},
}

Mauricio Bustamante\\
Departamento de Matem\'aticas, Pontificia Universidad Cat\'olica de Chile\\
\texttt{mauricio.bustamante@uc.cl}

\end{document}